\newtheorem{thm}{Theorem}[section]  
\newtheorem*{un-no-thm}{Theorem}
\newtheorem{cor}[thm]{Corollary}     
\newtheorem{lem}[thm]{Lemma}         
\newtheorem{prop}[thm]{Proposition}
\newtheorem{bigthm}{Theorem}
\newtheorem{bigcor}[bigthm]{Corollary}
\newtheorem{bigconj}[bigthm]{Conjecture}
\theoremstyle{definition}
\newtheorem{defn}[thm]{Definition}   
\theoremstyle{definition}
\theoremstyle{definition}
\theoremstyle{remark}
\newtheorem{rem}[thm]{Remark}
\newtheorem{hypo}[thm]{Hypothesis}
\newtheorem{notation}[thm]{Notation}
\newtheorem*{acks}{Acknowledgements}
\newtheorem*{out}{Outline}
\newtheorem*{intro-rem}{Remark}
\newtheorem*{intro-rems}{Remarks}
\newtheorem{ex}[thm]{Example}
\DeclareMathOperator*{\colim}{colim}
\DeclareMathOperator{\id}{id}
\begin{document}
\title[Poincar\'e Complex Diagonals...]{Poincar\'e Complex Diagonals\\ and\\ the Bass trace Conjecture}
\author{John R. Klein} 
\address{Wayne State University,
Detroit, MI 48202} 
\email{klein@math.wayne.edu} 

\author{Florian Naef}
\address{School of Mathematics,
Trinity College, Dublin 2,
Ireland}
\email{naeff@tcd.ie}
\thanks{}
\dedicatory{In memory of Andrew Ranicki}
\subjclass[2010]{Primary: 57P10, Secondary: 16E20, 13D03, 55P91}
\begin{abstract} For a finitely dominated Poincar\'e duality space $M$, 
we show how the first author's total obstruction $\mu_M$  to the existence of a Poincar\'e embedding of the diagonal map $M \to M \times M$ in \cite{Klein_diagonals} relates to the Reidemeister trace of the identity map of $M$.  We then apply this relationship to show that $\mu_M$ vanishes when suitable conditions on the fundamental group of $M$ are satisfied.
\end{abstract}

\maketitle
\setlength{\parindent}{15pt}
\setlength{\parskip}{1pt plus 0pt minus 1pt}
\def\smsh{\wedge}
\def\flush{\flushpar}
\def\dbslash{/\!\! /}
\def\:{\colon\!}
\def\Bbb{\mathbb}
\def\bold{\mathbf}
\def\cal{\mathcal}
\def\End{\text{\rm End}}
\def\stableto{\mapstochar \!\!\to}
\def\lr{{\ell r}}
\def\ad{\text{\rm{ad}}}
\def\tr{\sigma}
\def\Top{\text{\rm Top}}

\setcounter{tocdepth}{1}
{
  \hypersetup{linkcolor=olive}
  \tableofcontents
}

\addcontentsline{file}{sec_unit}{entry}

\section{Introduction} \label{sec:intro}

\subsection{Poincar\'e spaces} A distinguishing feature of smooth manifolds is that they carry tangential information: If $M$ is a smooth manifold,
then at each point $x\in M$, one may assign a tangent space $T_xM$.
The tangent spaces assemble into a vector bundle $\tau_M\:TM \to M$. If $M$ is compact, then
the total space $TM$ is diffeomorphic to a tubular neighborhood of the diagonal map $M \to M \times M$.
The existence of a tubular neighborhood corresponds to the fact that an $n$-manifold is locally 
diffeomorphic to $\Bbb R^n$ in a suitably parametrized way.

Note that  a tubular neighborhood determines a decomposition
\[
M \times M \cong D(\tau_M) \cup_{S(\tau_M)} C\,,
\]
where $D(\tau_M)$ is the tangent disk bundle of $M$, $S(\tau_M)$ is the tangent sphere bundle, and $C$ is the complement of the tubular neighborhood.
 
In the 1960s, Browder, Novikov, and Wall developed surgery theory as a tool to classify manifolds up to diffeomorphism in dimensions at least five. 
A  starting point for this theory  is that a manifold satisfies Poincar\'e duality. This led Levitt, Spivak and Wall to introduce
the notion of Poincar\'e duality space \cite{Spivak_duality}, \cite{Wall_Poincare}.  Such spaces satisfy Poincar\'e duality globally, however, in contrast with manifolds,
duality may fail locally.
Nevertheless, Spivak showed that Poincar\'e spaces can be equipped with {\it stable} tangential data: There is a stable spherical fibration, the {\it Spivak tangent fibration},
  which is unique up to contractible choice, and which can be characterized in a homotopy theoretic way.\footnote{Actually, Spivak only defined the {\it normal fibration} of a Poincar\'e space $M$. 
The normal and tangent fibrations are mutual inverses in the Grothendieck group of stable spherical fibrations over $M$.}

One is therefore led to the problem as to what a ``tubular neighborhood'' of the diagonal of a Poincar\'e space should be and 
how it might relate to the Spivak fibration. 

If $M$ is a Poincar\'e duality space formal dimension $n$, then by analogy with the manifold case,
one might define a ``Poincar\'e tubular neighborhood''
of the diagonal $\Delta\: M\to M\times M$ to consist of a homotopy theoretic decomposition
of the form
\[
M\times M \simeq D(\tau) \cup_{S(\tau)} C\, ,
\]
in which $\tau\: S(\tau) \to M$ is an $(n-1)$-spherical fibration which stabilizes to the Spivak tangent fibration,
$D(\tau)$ is the mapping cylinder of $\tau$, and $C$ is some space such that the pair $(C,S(\tau))$ satisfies Poincar\'e duality for pairs.
The above decomposition amounts to  the notion of {\it Poincar\'e embedding} in  the special case of the diagonal map  \cite{Levitt_structure}, \cite[\S11]{Wall_surgery}, \cite{Klein_haef}. 

In this paper we address the following question: When does $M$ possess such a decomposition? 
Assume in what follows that the formal dimension of $M$ is at least four.
In \cite{Klein_compression}, the first author showed that the diagonal admits a Poincar\'e embedding if  $M$ is 1-connected. When $M$ is not $1$-connected
 then in \cite{Klein_diagonals}  the first author defined an obstruction $\mu_M$ whose vanishing is both necessary and sufficient for the existence
of a Poincar\'e embedding of the diagonal. However, he was unable to determine when the obstruction vanishes. 

Our goal is to relate the obstruction $\mu_M$ to other, more well-known ones. We also apply our main results to prove that $\mu_M$ vanishes in some additional cases. 

\subsection{The Wall finiteness obstruction} 
For a group $\pi$, let $K_0(\Bbb Z[\pi])$ be the Grothendieck group of finitely generated projective (left) 
$\Bbb Z[\pi]$-modules.
According to \cite{Wall_finiteness1}, a connected finitely dominated connected space $X$ with fundamental group $\pi$ determines an element
\[
w(X)\in K_0(\Bbb Z[\pi])\, ,
\] 
called the {\it Wall finiteness obstruction}.
If we assume that $\pi$ is finitely presented, then $X$ has the homotopy type of a finite CW complex
if and only if $\tilde w(X)= 0$, where $\tilde w(X) \in  \widetilde{K}_0(\Bbb Z[\pi])$ is the image $w(X)$ in
the reduced Grothendieck group.

\subsection{The Reidemeister characteristic}
Let $\bar \pi$ denote the set of conjugacy classes of $\pi$, and let $\Bbb Z\langle \bar\pi \rangle$ be the free abelian
group with basis $\bar \pi$.
The {\it Hattori-Stallings trace} \cite{Hattori_trace}, \cite{Stallings_trace} is a homomorphism 
\begin{equation} \label{eqn:ht-trace-homo}
r\: K_0(\Bbb Z[\pi]) \to \Bbb Z\langle \bar\pi \rangle
\end{equation}
induced by assigning to a finitely generated projective $\Bbb Z[\pi]$-module $P$ the trace
of the identity map $P \to P$, where the trace is defined by means of the diagram of homomorphisms
\begin{equation}\label{eqn:defn-ht}
\hom_{\Bbb Z[\pi]}(P,P) @< \cong << \hom_{\Bbb Z[\pi]}(P,\Bbb Z[\pi])\otimes_{\Bbb Z[\pi]} P \to \Bbb Z[\pi^\ad]_\pi = \Bbb Z\langle \bar\pi \rangle \, .
\end{equation}
Here, $\hom_{\Bbb Z[\pi]}(P,\Bbb Z[\pi])$ is viewed as a right $\Bbb Z[\pi]$-module and
$\pi^\ad = \pi$ considered as a $\pi$-set together with the conjugation action.
The second displayed homomorphism in \eqref{eqn:defn-ht} is induced by 
evaluation $f\otimes x\mapsto f(x)$.

For a connected finitely dominated space $X$ with fundamental group $\pi$,
the {\it Reidemeister characteristic} 
\[
r(X) \in \Bbb Z\langle \bar\pi \rangle
\]
is defined as $r(X) := r(w(X))$.\footnote{More precisely, $r(X)$ is
 the Reidemeister trace (or generalized Lefschetz trace) of the identity map of $X$.} Let 
$\widetilde{\Bbb Z}\langle \bar\pi \rangle$ denote the cokernel of the
inclusion $\Bbb Z\langle e \rangle \to \Bbb Z\langle \bar\pi \rangle$, where $e \in \pi$ is the trivial element.
The reduced
Reidemeister characteristic $\tilde r(X)$ is the image of $r(X)$ in $\widetilde{\Bbb Z}\langle \bar\pi \rangle$.

Our first main result is:

\begin{bigthm} \label{bigthm:rank-thm} Suppose that $M$ is a connected, finitely dominated Poincar\'e duality space.  
If $\tilde r(M) \ne 0$, then the diagonal map $M \to M^{\times 2}$ does not admit a Poincar\'e embedding.
\end{bigthm}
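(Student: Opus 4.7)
I prove the contrapositive: a Poincar\'e embedding of $\Delta \colon M \to M \times M$ forces $\tilde r(M) = 0$. The strategy is to interpret $r(M)$ as a Nielsen fixed-point count for a perturbation of $\text{id}_M$, and to use the tubular neighborhood of the diagonal to concentrate all fixed points in the trivial Nielsen class.

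Recall first the fixed-point-theoretic content: $r(\text{id}_M)$ is the Reidemeister--Wecken trace of the identity. Concretely, for any $f \simeq \text{id}_M$ with isolated transverse fixed points and any chosen lift $\tilde f \colon \tilde M \to \tilde M$, one has
\[
r(\text{id}_M) \;=\; \sum_{[g]\in\bar\pi}\, L(g\tilde f)\cdot [g]\,,
\]
where $L(g\tilde f)$ is the intersection number of the graph $\Gamma_{g\tilde f}$ with the diagonal $\tilde\Delta\subset \tilde M\times \tilde M$, equivalently the index-weighted count of fixed points of $f$ in the Nielsen class $[g]$. I now use the Poincar\'e embedding to produce a suitable $f$: given $M\times M \simeq D(\tau)\cup_{S(\tau)} C$, a section (possibly with isolated zeros) of the disk fibration $D(\tau)\to M$ yields a map $\Delta' \simeq \Delta$ whose image lies in $D(\tau)$ and meets $\Delta(M)$ only at finitely many points. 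Writing $\Delta'(x)=(x,f(x))$ produces a perturbation $f \simeq \text{id}_M$ with $\Gamma_f \subset D(\tau)$.

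Because $\pi_1(D(\tau))\cong\pi_1(M)=\pi$ includes diagonally into $\pi\times\pi=\pi_1(M\times M)$, the preimage of $D(\tau)$ in $\tilde M\times\tilde M$ splits as
\[
D(\tau)\times_{M\times M}(\tilde M\times\tilde M) \;\cong\; \bigsqcup_{g\in\pi} D(\tau)_g\,,
\]
with each $D(\tau)_g$ deformation-retracting onto $\Gamma_g=\{(x,gx)\}$. Choosing the lift $\tilde f$ so that $\Gamma_{\tilde f}\subset D(\tau)_e$, one has $\Gamma_{\tilde f}\cap \Gamma_g =\emptyset$ for every $g\ne e$, since the components $D(\tau)_g$ are pairwise disjoint and $\Gamma_g\subset D(\tau)_g$. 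Hence $L(g\tilde f)=0$ for $g\ne e$, so $r(\text{id}_M)=\chi(M)\cdot [e]$ and $\tilde r(M)=0$. The main obstacle will be carrying out the perturbation and Nielsen-class bookkeeping in the Poincar\'e (rather than smooth) setting, where classical tubular neighborhoods and transversality arguments are unavailable; I expect this to be handled at the spectrum level using the Pontryagin--Thom collapse $c \colon M\times M \to M^\tau$ supplied by the Poincar\'e embedding, so that $c\circ\Delta$ represents the Euler class $e(\tau)$ and the equivariant self-intersection of $\Delta$ reduces to $e(\tau)\cap [M]=\chi(M)\in\Bbb Z$, which a priori lives in the $[e]$-component of $\Bbb Z\langle \bar\pi\rangle$.
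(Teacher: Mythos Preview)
Your approach is essentially correct and genuinely different from the paper's. The paper deduces Theorem~A from the stronger Theorem~B (the identity $\tilde r(M)=\tilde\sigma(\mu_M)$), which is proved via the equivariant Hopf invariant formula together with the identifications $I(M)=r(M)$ and $\tilde{\mathfrak e}(\xi)=0$, and then invokes the earlier result that $\mu_M=0$ whenever a Poincar\'e embedding exists. You instead argue directly: the embedding supplies an unstable equivariant collapse, and its restriction to the diagonal factors through the trivial-class summand.

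Your covering-space observation is the key, and it requires neither transversality nor perturbation. The smooth scaffolding (isolated fixed points, sections with zeros) is a red herring: once you have the homotopy pushout $M\times M\simeq D(\xi)\cup_{S(\xi)}C$, the preimage of $D(\xi)$ in $\tilde M\times\tilde M$ decomposes as $\bigsqcup_{g\in\pi}\tilde D(\xi)_g$ purely by covering-space theory, and the lifted diagonal sits in the $e$-component. Hence the equivariant collapse $\tilde c$ restricted along $\tilde\Delta$ factors through the $e$-wedge-summand of $S^\xi[G^{\ad}]$, which is exactly the $[e]$-part of $\Bbb Z\langle\bar\pi\rangle$. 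No Euler-class computation is needed beyond this factorization.

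What your sketch does not supply is the identification of $r(M)$ (defined via the Hattori--Stallings trace of $w(M)$) with the $G$-equivariant stable class of $\Delta_!\circ\Delta$. This is the content of the paper's Lemma~4.2 and is not entirely formal: it passes through the norm equivalence and Atiyah duality for Poincar\'e spaces. You also need that, given a Poincar\'e embedding, the abstractly defined umkehr $\Delta_!$ agrees stably with the collapse $c$; this is established in the first author's earlier work. With those two inputs your argument is complete.

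The trade-off: your route is shorter and more geometric, but it proves only Theorem~A. The paper's detour through $\mu_M$ yields the unconditional identity $\tilde r(M)=\tilde\sigma(\mu_M)$, which is what drives Corollaries~C--E.
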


\begin{rem} There is currently no known example in which $\tilde r(M) \ne 0$. In fact, the {\it Bass trace conjecture} for a group $\pi$
is the statement that the reduced Hattori-Stallings trace 
\[
\tilde r\: {\widetilde K}_0(\Bbb Z[\pi]) \to \widetilde{\Bbb Z}\langle\bar \pi\rangle
\]
is trivial \cite{Bass_Euler}. Although the conjecture remains open, it has been verified for a large class of groups,
including the residually nilpotent ones. See \cite[\S3]{Berrick-Hesselholt} for more details.
\end{rem} 

\subsection{The diagonal obstruction}
Let $d \ge 0$ be an integer.
Then the abelian group ${\Bbb Z}\langle\bar \pi\rangle$ comes equipped with an involution $\alpha \mapsto \bar \alpha$ induced by the operation
$x \mapsto (-1)^{-d}x^{-1}$
for $x\in \pi$.  
We set 
\[
Q_d(\pi) := {\Bbb Z}\langle\bar \pi \rangle_{\Bbb Z_2}\, ,
\] i.e.,  the coinvariants
of the involution. Similarly, one may define a reduced version of 
$Q_d(\pi)$  as the cokernel
\[
\tilde Q_d(\pi) := \text{coker}(Q_d(e) \to Q_d(\pi) )\, ,
\]
where $e$ denotes the trivial group.

The  transfer homomorphism
\[
\tilde{\tr}\: \tilde Q_d(\pi) \to  \widetilde{\Bbb Z}\langle\bar \pi \rangle
\]
is defined by $\tilde{\tr}(\alpha) = \alpha + \bar\alpha$.

Let $M$ be a connected, finitely dominated Poincar\'e duality space of dimension $d$ and fundamental group $\pi$.
Then as alluded to above, in \cite{Klein_diagonals}  an obstruction
\[
\mu_M \in \tilde Q_d(\pi)
\]
was introduced
which vanishes if the diagonal map $M\to M\times M$ admits a Poincar\'e embedding. Conversely, when $d  \ge 4$,
it was shown that $\mu_M = 0$ implies that a Poincar\'e embedding of the diagonal exists. The obstruction $\mu_M$ is
an invariant of the homotopy type of $M$.

Theorem \ref{bigthm:rank-thm} is an immediate consequence of our second main result, which relates the Reidemeister characteristic $r(M)$ to 
the diagonal obstruction $\mu_M$.

\begin{bigthm} \label{bigthm:main} Let $M$ be a connected, finitely dominated Poincar\'e duality space. 
Then
\[
\tilde r(M) = \tilde{\tr}(\mu_M)\, .
\]
In particular, if the diagonal $M \to M^{\times 2}$ admits a Poincar\'e embedding, then $\tilde r(M) = 0$. 
\end{bigthm}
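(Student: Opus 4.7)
The strategy is to compare both sides of the asserted identity at the level of chain complexes over the group ring. Let $\pi = \pi_1(M)$ and let $C = C_*(\tilde M)$ be the singular chain complex of the universal cover, regarded as a bounded chain complex of left $\Bbb Z[\pi]$-modules. Finite domination of $M$ implies $C$ is chain equivalent to a bounded complex of finitely generated projective $\Bbb Z[\pi]$-modules, whose Euler characteristic in $K_0(\Bbb Z[\pi])$ is the Wall finiteness obstruction $w(M)$. By definition, the Hattori-Stallings trace of this class is $r(M)$.

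First I would unwind the construction of $\mu_M$ from \cite{Klein_diagonals}. It arises as the obstruction to promoting a chain-level diagonal approximation $C \to C \otimes_{\Bbb Z} C$ to data compatible with the Poincar\'e duality pairing together with the swap involution on $M \times M$. The $\Bbb Z_2$-action responsible for landing in $Q_d(\pi)$ comes from this swap, twisted by $(-1)^d$ from the orientation class; this is what produces the involution $x \mapsto (-1)^d x^{-1}$ on $\Bbb Z\langle \bar\pi \rangle$. Concretely, $\mu_M$ should be representable, after passage to coinvariants, as a trace-like element of $\Bbb Z\langle \bar\pi \rangle$ built from a self-equivalence of a finitely generated projective model of $C$ composed with the Poincar\'e duality chain-level equivalence.

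Next I would compute $\tilde{\tr}(\mu_M)$ by post-composing with the transfer $\alpha \mapsto \alpha + \bar \alpha$, and show that the resulting symmetrization agrees with the Hattori-Stallings trace of $w(M)$. The main input is the identification of the chain-level Poincar\'e pairing with the evaluation pairing
\[
\hom_{\Bbb Z[\pi]}(P, \Bbb Z[\pi]) \otimes_{\Bbb Z[\pi]} P \to \Bbb Z[\pi^{\ad}]_\pi
\]
appearing in \eqref{eqn:defn-ht}. The key point is that $\mu_M$ records half of the diagonal data (the part captured only up to the $\Bbb Z_2$-equivariance), while the transfer $\tilde{\tr}$ restores the full trace by summing the class with its image under the involution. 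In the non-equivariant world this is precisely the trace of the identity on a finitely generated projective representative of $C$, i.e., $r(M)$.

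The main obstacle I anticipate is the careful chain-level bookkeeping: realizing $\mu_M$ concretely on a finite projective model of $C$ so that the symmetrization map becomes literally the Hattori-Stallings trace, and matching the signs (the factor $(-1)^d$ in the involution and the signs arising from chain-level Poincar\'e duality) on the two sides. Once this chain-level comparison is in place, Theorem B follows. Theorem A is then immediate: if the diagonal admits a Poincar\'e embedding then $\mu_M = 0$, and $\tilde r(M) = \tilde{\tr}(0) = 0$.
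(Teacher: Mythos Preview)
Your proposal is too schematic to constitute a proof, and it conceals a genuine gap. The obstruction $\mu_M$ is by definition the equivariant geometric Hopf invariant $H(\Delta_!)$ of the stable \emph{umkehr} map $\Delta_!\: EG_+^{[2]} \to S^\tau[G^\lr]$, not of a chain-level diagonal approximation $C \to C \otimes C$. The defining identity \eqref{eqn:hopfinv} for the Hopf invariant then yields a \emph{two-term} formula
\[
\tr(H(\Delta_!)) \;=\; (\Delta_! \smsh \Delta_!) \circ \Delta \;-\; \Delta_{S^\tau[G^\lr]} \circ \Delta_!\, ,
\]
in which the first term on the right is the self-intersection invariant $I(M)$ and the second is a tangential Euler invariant $\mathfrak e(\xi)$. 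Your heuristic that ``$\mu_M$ records half of the diagonal data and the transfer restores the full trace'' suppresses this second term; it is not zero in $\Bbb Z\langle\bar\pi\rangle$, and one must argue separately that it lies in the summand of the trivial conjugacy class so that it dies upon reduction. That is the content of Proposition~\ref{prop:free-loop-ident}.

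Even after the Euler term is dealt with, identifying $I(M)$ with $r(M)$ is the substance of the argument and is not addressed by your chain-level sketch. In the paper this identification (Proposition~\ref{prop:I=r}) comes from a norm-map computation (Lemma~\ref{lem:triangle}) showing that ``smash with $\Delta_!$ and take $(G\times G)$-homotopy fixed points'' agrees with ``restrict along the diagonal $G \hookrightarrow G \times G$''; applied with $E = S^\tau[G^\lr]$ this converts $I(M)$ into the description of $r(M)$ given in Lemma~\ref{lem:Rtraceiseuler}. Your proposed direct match between the symmetrization of $\mu_M$ and the Hattori--Stallings trace of $\id_C$ would have to reproduce both of these steps, and as written it does neither. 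The final sentence of your argument (that a Poincar\'e embedding forces $\mu_M=0$, hence $\tilde r(M)=0$) is correct and is exactly how the paper deduces the second assertion.
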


\begin{hypo} \label{hypo:conditions} $M$ is a connected Poincar\'e duality space
of dimension $d$ such that one of the following conditions
hold:
\begin{itemize}
\item $M$ is homotopy finite, or
\item $M$ is finitely dominated and the Bass trace conjecture holds for $\pi = \pi_1(M)$.
\end{itemize}
\end{hypo}

\begin{bigcor}\label{bigcor:main} Assume $M$ satisfies Hypothesis \ref{hypo:conditions}.
Then  $\tilde \tr(\mu_M)$ is trivial.
\end{bigcor}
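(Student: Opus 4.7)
The plan is to deduce the corollary directly from Theorem \ref{bigthm:main}, which identifies $\tilde{\tr}(\mu_M)$ with the reduced Reidemeister characteristic $\tilde r(M)$. Thus it suffices to verify that $\tilde r(M) = 0$ under either alternative in Hypothesis \ref{hypo:conditions}.

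Suppose first that $M$ is homotopy finite. Then $M$ has the homotopy type of a finite CW complex, and the cellular chain complex of the universal cover, considered as a chain complex of $\Bbb Z[\pi]$-modules, consists of finitely generated free modules. Consequently $w(M)$ lies in the image of $K_0(\Bbb Z) \to K_0(\Bbb Z[\pi])$; explicitly, $w(M) = \chi(M)\cdot [\Bbb Z[\pi]]$. Since the Hattori-Stallings trace sends $[\Bbb Z[\pi]]$ to the class $e \in \Bbb Z\langle\bar\pi\rangle$ of the trivial conjugacy class, one concludes $r(M) = \chi(M)\cdot e$. This element dies in the quotient $\widetilde{\Bbb Z}\langle\bar\pi\rangle$, so $\tilde r(M) = 0$.

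Now suppose instead that $M$ is finitely dominated and that the Bass trace conjecture holds for $\pi = \pi_1(M)$. By the definition of the conjecture, the reduced Hattori-Stallings trace
\[
\tilde r\: \widetilde{K}_0(\Bbb Z[\pi]) \to \widetilde{\Bbb Z}\langle\bar\pi\rangle
\]
is identically zero. Applying this to $\tilde w(M) \in \widetilde{K}_0(\Bbb Z[\pi])$ yields $\tilde r(M) = 0$.

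Combining either case with Theorem \ref{bigthm:main} gives $\tilde{\tr}(\mu_M) = \tilde r(M) = 0$. There is no substantive obstacle here; the corollary is essentially a packaging of Theorem \ref{bigthm:main} together with two standard facts about the Hattori-Stallings trace, namely that it kills free modules modulo the trivial conjugacy class, and that it annihilates $\widetilde K_0$ whenever the Bass trace conjecture is known.
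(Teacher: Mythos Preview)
Your proof is correct and follows the same approach as the paper: both reduce to showing $\tilde r(M)=0$ under either alternative of Hypothesis~\ref{hypo:conditions} and then invoke Theorem~\ref{bigthm:main}. The paper's proof simply asserts $\tilde r(M)=0$ in these cases without further justification, whereas you have spelled out the standard reasons.
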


\begin{rem} As mentioned above, it was known that $\mu_M$ vanishes
when $M$ is simply connected or when $M$ has the homotopy type of a closed
manifold. The only other case that we were aware of
occurs when $M$ is a ``two-stage patch space," i.e., a Poincar\'e space  obtained by gluing
together two compact manifolds along a homotopy equivalence of their boundaries.
If in addition the fundamental group of the boundary is square root closed, then according to 
\cite{Byun} there is a diagonal Poincar\'e embedding, and we infer $\mu_M = 0$ in this instance.
\end{rem} 
Using Corollary \ref{bigcor:main}, we obtain several additional cases:

\begin{bigcor} \label{bigcor:ex} 
Assume that $M$ is a connected, finitely dominated Poincar\'e space and $d = \dim M \ge 4$ is 
even. Additionally, assume that $\pi$ is abelian and 
the 2-primary subgroup of $\pi$ is a finite direct product of cyclic groups of order two.
Then the diagonal map $\Delta \: M \to M^{\times 2}$ admits a Poincar\'e embedding.
\end{bigcor}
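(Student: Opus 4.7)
The plan is to combine Corollary~\ref{bigcor:main} with injectivity of the reduced transfer in this setting to conclude that $\mu_M = 0$; since $d \ge 4$, the converse direction of the main result of \cite{Klein_diagonals} will then supply the required Poincar\'e embedding of $\Delta\:M\to M^{\times 2}$.

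First I would check that Hypothesis~\ref{hypo:conditions} is satisfied. Since $\pi$ is abelian it is (trivially) residually nilpotent, so the Bass trace conjecture holds for $\pi$ by \cite[\S3]{Berrick-Hesselholt}; together with finite domination of $M$, this verifies the second clause of Hypothesis~\ref{hypo:conditions}. Corollary~\ref{bigcor:main} then yields $\tilde{\tr}(\mu_M) = 0$ in $\widetilde{\Bbb Z}\langle\bar\pi\rangle$.

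The main step is to show that $\tilde{\tr}\: \tilde Q_d(\pi) \to \widetilde{\Bbb Z}\langle\bar\pi\rangle$ is injective. Since $\pi$ is abelian, $\bar\pi = \pi$ and $\Bbb Z\langle\bar\pi\rangle = \Bbb Z[\pi]$; since $d$ is even, the involution is $g\mapsto g^{-1}$. Thus $\tilde Q_d(\pi)$ is free abelian on the inversion orbits $\{g,g^{-1}\}$ with $g\ne e$, and $\tilde{\tr}([g]) = g + g^{-1}$: this equals $2g$ when $g^2 = e$ and is a sum of two distinct non-identity basis elements of $\widetilde{\Bbb Z}\langle\bar\pi\rangle$ otherwise. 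If $\sum n_{[g]}[g] \in \ker\tilde{\tr}$, reading off the coefficient of each non-identity basis element of the target produces equations $n_{[g]} = 0$ or $2 n_{[g]} = 0$, each forcing $n_{[g]} = 0$ in $\Bbb Z$. (The underlying reason: $\widetilde{\Bbb Z}\langle\bar\pi\rangle$ is a permutation $\Bbb Z/2$-module for the inversion involution, with $\hat H^1$ vanishing orbit by orbit.) Consequently $\mu_M = 0$ in $\tilde Q_d(\pi)$, and the cited result of \cite{Klein_diagonals} supplies the claimed Poincar\'e embedding.

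The point worth flagging is the role of the hypothesis on the 2-primary subgroup of $\pi$: the injectivity argument above uses only that $\pi$ is abelian and $d$ is even, so the elementary-abelian 2-torsion assumption appears stronger than what is strictly needed for this route. I would expect either that the authors include it conservatively (perhaps in anticipation of non-abelian generalizations, where controlling conjugacy classes of order two is nontrivial) or that they reach the same conclusion via a somewhat different structural argument in which order-two elements of $\pi$ play a more substantive role.
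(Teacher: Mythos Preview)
Your argument is correct. The route is close to the paper's but organized differently, and in fact slightly sharper.

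The paper proceeds as follows: from $\tilde\sigma(\mu_M)=0$ it uses the general fact that the composite $\tilde Q_d(\pi)\xrightarrow{\tilde\sigma}\widetilde{\Bbb Z}\langle\bar\pi\rangle\to\tilde Q_d(\pi)$ is multiplication by $2$ to deduce $2\mu_M=0$, and then shows $Q_d(\pi)$ has no $2$-torsion. For the latter it writes $\pi=H\times K$ with $H$ the $2$-primary part, argues that $Q_d(\pi)\cong Q_d(H)\otimes Q_d(K)$, and checks each tensor factor is free abelian. The tensor decomposition step is exactly where the elementary-abelian hypothesis on $H$ is used: since every element of $H$ then satisfies $x=x^{-1}$, the involution on $\Bbb Z\langle H\rangle$ is trivial, which is what makes the coinvariants of the tensor product split as a tensor of coinvariants.

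Your approach bypasses this by observing directly that for abelian $\pi$ and even $d$ the involution on $\Bbb Z\langle\pi\rangle$ permutes the basis, so $\tilde Q_d(\pi)$ is free on the nontrivial inversion orbits and $\tilde\sigma$ is visibly injective (distinct orbits have disjoint support in the target, and each image $g+g^{-1}$ is nonzero). This is equivalent to the paper's torsion-freeness conclusion but does not pass through the $H\times K$ splitting, and hence does not need the hypothesis on the $2$-primary subgroup. Your closing remark is therefore on point: with this argument the corollary holds for any abelian $\pi$ once $d\ge 4$ is even; the extra hypothesis is an artifact of the particular proof the paper chose rather than a genuine obstruction.
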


\begin{bigcor} \label{bigcor:ex2}
Assume that $M$ satisfies Hypothesis \ref{hypo:conditions} with $d \ge 4$. Suppose
that every non-trivial element of  $\pi$ has odd order.
Then the diagonal map $\Delta \: M \to M^{\times 2}$ admits a Poincar\'e embedding.
\end{bigcor}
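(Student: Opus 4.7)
The plan is to deduce that $\mu_M = 0$ by combining Corollary \ref{bigcor:main} with an injectivity statement for $\tilde\tr$. Corollary \ref{bigcor:main} gives $\tilde\tr(\mu_M) = 0$ in $\widetilde{\Bbb Z}\langle\bar\pi\rangle$, so it suffices to show that under the hypothesis on $\pi$, the transfer
\[
\tilde\tr\: \tilde Q_d(\pi) \to \widetilde{\Bbb Z}\langle\bar\pi\rangle, \qquad [x]\mapsto [x] + (-1)^d [x^{-1}],
\]
is injective. Once this is established, $\mu_M = 0 \in \tilde Q_d(\pi)$, and since $d \ge 4$, the main result of \cite{Klein_diagonals} produces a Poincar\'e embedding of the diagonal.

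The core of the argument is the following group-theoretic observation, which I expect to be the main (if brief) obstacle: \emph{if every non-trivial element of $\pi$ has odd order, then no non-trivial element of $\pi$ is conjugate to its inverse.} To see this, suppose $gxg^{-1} = x^{-1}$ with $x \ne e$. Then $g^2 x g^{-2} = x$, so $g^2$ commutes with $x$. Writing the odd order of $g$ as $2k+1$, one has $g = g^{-2k} = (g^2)^{-k}$, which therefore also commutes with $x$. Hence $x = gxg^{-1} = x^{-1}$, forcing $x$ to have order dividing $2$; combined with the hypothesis this gives $x = e$, a contradiction.

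With this in hand, the injectivity of $\tilde\tr$ is an elementary basis computation. The involution $[x] \mapsto (-1)^d[x^{-1}]$ on $\widetilde{\Bbb Z}\langle\bar\pi\rangle$ now has no fixed basis elements, so the non-trivial conjugacy classes partition into orbits of size two, $\{[x],[x^{-1}]\}$ with $[x]\neq [x^{-1}]$. On each such orbit the coinvariant module contributes a free summand $\Bbb Z$ to $\tilde Q_d(\pi)$, and $\tilde\tr$ sends its generator to the primitive element $[x] + (-1)^d[x^{-1}]$ of $\widetilde{\Bbb Z}\langle\bar\pi\rangle$. These images, indexed over distinct orbits, are manifestly $\Bbb Z$-linearly independent, so $\tilde\tr$ is injective. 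This holds for either parity of $d$, so no case split is needed. Putting everything together, $\mu_M = 0$, and the corollary follows.
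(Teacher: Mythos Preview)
Your proof is correct and follows essentially the same approach as the paper. The group-theoretic core---that no non-trivial element of $\pi$ is conjugate to its inverse when all orders are odd---is argued identically; the only cosmetic difference is that the paper phrases the endgame as ``$2\mu_M=0$ and $\tilde Q_d(\pi)$ is torsion free'' (using that the composite $\tilde Q_d(\pi)\to\widetilde{\Bbb Z}\langle\bar\pi\rangle\to\tilde Q_d(\pi)$ is multiplication by $2$), whereas you argue directly that $\tilde\tr$ is injective, but both conclusions come from the same orbit decomposition of $\bar\pi\setminus\{e\}$.
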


The above results justify making the following conjecture.

\begin{bigconj} \label{bigconj:main} Assume that $M$ satisfies Hypothesis \ref{hypo:conditions} with $d \ge 4$. Then
 the diagonal  map $\Delta\: M \to M^{\times 2}$ admits a Poincar\'e embedding.
\end{bigconj}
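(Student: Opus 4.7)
Combining Theorem \ref{bigthm:main} with Corollary \ref{bigcor:main}, any $M$ satisfying Hypothesis \ref{hypo:conditions} has $\tilde{\tr}(\mu_M) = 0$, so $\mu_M$ lies in the kernel of the transfer
\[
\tilde{\tr}\: \tilde Q_d(\pi) \to \widetilde{\Bbb Z}\langle\bar\pi\rangle.
\]
The plan is to show this kernel is either trivial, or else inaccessible to $\mu_M$ for geometric reasons.

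The first step is to compute $\ker(\tilde{\tr})$ orbit-by-orbit on $\bar\pi$. When $d$ is even the involution sends $[x]$ to $[x^{-1}]$; each orbit summand of $\tilde Q_d(\pi)$ is a copy of $\Bbb Z$, and on it $\tilde{\tr}$ is either multiplication by $2$ (on a self-inverse conjugacy class) or the diagonal $n\mapsto n([x]+[x^{-1}])$ (on a pair orbit $\{[x],[x^{-1}]\}$). Both maps are injective into the torsion-free group $\widetilde{\Bbb Z}\langle\bar\pi\rangle$, so $\ker(\tilde{\tr})=0$; combined with Corollary \ref{bigcor:main} this already gives the conjecture for $d$ even. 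When $d$ is odd the involution is $[x]\mapsto -[x^{-1}]$, and each self-inverse class $[x]=[x^{-1}]$ with $x\ne e$ contributes a copy of $\Bbb Z/2$ to $\tilde Q_d(\pi)$ on which $\tilde{\tr}$ vanishes identically; this $\Bbb Z/2$-torsion piece is exactly $\ker(\tilde{\tr})$. If $\pi$ admits no such non-trivial classes --- e.g., when every non-trivial element of $\pi$ has odd order, the content of Corollary \ref{bigcor:ex2} --- the injectivity argument again applies.

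The real content of the conjecture is therefore that the residual $\Bbb Z/2$-torsion component of $\mu_M$ vanishes in odd dimensions when self-inverse conjugacy classes exist. To attack this I would lift $\mu_M$ to an invariant that retains information destroyed by passing to $C_2$-coinvariants. The construction of $\mu_M$ in \cite{Klein_diagonals} factors through a $C_2$-equivariant obstruction associated to the swap action on $M\times M$; replacing coinvariants with the Tate construction should place $\tilde r(M)$ and $\mu_M$ into a common Tate long exact sequence with the residual kernel detected by a genuine equivariant trace. Matching this Tate refinement with a $C_2$-equivariant Hattori-Stallings trace on the $\pi^{\ad}$-twisted form of $\TR$ or topological Hochschild homology of $\Bbb Z[\pi]$ would reduce the vanishing of the residual torsion to an equivariant enhancement of the Bass conjecture.

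The main obstacle is precisely this comparison. Self-inverse conjugacy classes are geometric avatars of $C_2$-fixed points of $\pi^{\ad}$, and they generate Tate cohomology classes that are invisible to the ordinary Hattori-Stallings trace; there is no obvious reason that the Bass conjecture in its current form controls such equivariant trace data. Completing the proof in the residual odd-dimensional case will likely require either a genuinely $C_2$-equivariant enhancement of the Bass trace conjecture, strong enough to detect the Tate refinement of $\mu_M$, or a direct geometric argument using the Poincar\'e structure of $M$ to symmetrize $\mu_M$ before passing to the coinvariants.
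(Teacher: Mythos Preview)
The statement you are attempting is Conjecture~\ref{bigconj:main}; the paper does not prove it and offers no argument to compare against. What you have written is therefore not a proof of the conjecture but a partial result together with a speculative program for the remainder, and it should be assessed as such.

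Your even-dimensional analysis is correct and in fact settles the conjecture for $d$ even. The point is that for $d$ even the involution $\alpha\mapsto\bar\alpha$ on $\Bbb Z\langle\bar\pi\rangle$ is the sign-free inversion $[x]\mapsto[x^{-1}]$, so on every orbit the coinvariants are a copy of $\Bbb Z$ and $\tilde Q_d(\pi)$ is free abelian for \emph{every} group $\pi$. Since the composite $\tilde Q_d(\pi)\xrightarrow{\tilde\tr}\widetilde{\Bbb Z}\langle\bar\pi\rangle\to\tilde Q_d(\pi)$ is multiplication by $2$, torsion-freeness forces $\tilde\tr$ to be injective, and Corollary~\ref{bigcor:main} then gives $\mu_M=0$. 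This is exactly the mechanism behind the paper's proof of Corollary~\ref{bigcor:ex}; the additional hypotheses there on $\pi$ (abelian, with restricted $2$-primary part) serve only to verify the Bass conjecture and to run a particular factorization $Q_d(\pi)\cong Q_d(H)\otimes Q_d(K)$, neither of which is needed once Hypothesis~\ref{hypo:conditions} is assumed outright. So you have genuinely strengthened what the paper states, by the paper's own methods.

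Your odd-dimensional analysis of $\ker(\tilde\tr)$ is also correct: the kernel is exactly the $\Bbb Z/2$-torsion contributed by non-trivial real (self-inverse) conjugacy classes, and when there are none you recover Corollary~\ref{bigcor:ex2}. However, from that point on you no longer have a proof. The Tate-refinement program you sketch---lifting $\mu_M$ to a $C_2$-equivariant invariant and comparing with an equivariant Hattori--Stallings trace---is a plausible line of attack, but you yourself identify the obstacle: there is no known equivariant strengthening of the Bass conjecture that would control the residual $\Bbb Z/2$, and nothing in the paper supplies one. As written, the odd-dimensional case with non-trivial real conjugacy classes remains open, and your proposal should be presented as a strategy rather than as a proof.
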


\begin{out} In \S2 contains the some prerequisites for reading the paper; much of what is contained there
is well-known and in the literature. In \S3 we introduce the Euler invariant of a Poincar\'e space equipped with
an unstable the Spivak tangent fibration. In \S4 we introduce a homotopy-theoretic self-intersection invariant. In \S5 is a discussion
of the equivariant geometric Hopf invariant as in \cite{Crabb-Ranicki} but in the more general setting of a topological group.
The main results are proved in \S6.
\end{out}

\begin{acks} Andrew Ranicki had a profound impact on the first author's mathematical career. He was both a mentor and a friend.  
The current work was inspired by him. The authors wish to thank Ian Leary for pointing out Wall's examples
 \cite[cor.~5.4.2]{Wall_Poincare} of non-homotopy finite Poincar\'e duality spaces.

This research was supported by the U.S.~Department of Energy, Office of Science, under Award Number DE-SC-SC0022134.
\end{acks}

\section{Preliminaries}

Here we develop the minimal foundational scaffolding.
All of this material has appeared elsewhere and the exposition is not to be regarded as exhaustive.

\subsection{Spaces} Let $\Top$ be the Quillen model category of compactly generated weak Hausdorff spaces
\cite{Hirschhorn_model}.\footnote{The reader is to be reminded
that products in $\Top$ are to be retopologized using the compactly generated topology.} The
weak equivalences of $\Top$ are the weak homotopy equivalences, and the fibrations are the Serre fibrations. The cofibrations 
are defined using the right lifting property with respect to the trivial fibrations.
In particular, every object $\Top$ is fibrant. An object is cofibrant whenever it is a retract of a cell complex. We let $\Top_\ast$
denote the category of based spaces. Then $\Top_\ast$ inherits a Quillen model structure from $\Top$ by means of the forgetful functor
$\Top_\ast\to \Top$.

An object in $\Top$ or $\Top_\ast$ is {\it finite} if it is a finite cell complex. It is {\it homotopy finite}
if it is weakly equivalent to a finite object. A object is $X$ is {\it finitely dominated} if it is a retract of a 
homotopy finite object.

We employ the following notation, most of which is standard.
For objects $X,Y\in \Top_\ast$, we let $X\vee Y$ denote the wedge and we let $X\smsh Y$ denote
the smash product. The suspension of $X$ is then $S^1 \smsh X$, where $S^1$ is the circle.   When we write $X^{[k]}$, we mean the 
$k$-fold smash product of $X$ with itself. When passing to the stable category of based spaces we sometimes
write $X\stableto Y$ to indicate a stable map. If $X$ is an unbased space, we write $X_+$ for the based space $X \amalg \ast$ given by taking
the disjoint union with a basepoint. For a based space $A$ and an unbased space $B$, our convention will be $A\smsh B_+ := A \smsh (B_+)$ and
$B_+ \smsh A :=  (B_+) \smsh A$.

\subsection{Poincar\'e duality spaces} Recall that an object $M \in \Top$ is a {\it Poincar\'e duality space} of (formal) dimension $d$ if
there exists a pair
\[
(\cal L,[M])
\]
in which $\cal L$ is a rank one local coefficient system (or {\it orientation sheaf}) and 
\[
[M] \in H_d(M;\cal L)
\]
is a {\it fundamental class} such that for all local coefficient systems $\cal B$, the cap product homomorphism
\[
\cap [M] \: H^\ast(M;\cal B) \to H_{d-\ast}(M;\cal L\otimes\cal B)
\]
is an isomorphism in all degrees (cf.~\cite{Wall_Poincare}, \cite{Klein_Poincare}).  
The Poincar\'e
spaces considered in this paper are assumed to be connected, finitely dominated and cofibrant.
Note that if the pair $(\cal L, [M])$ exists, then it is determined up to unique isomorphism by the above property. 
Also note that closed manifolds
are homotopy finite Poincar\'e duality spaces.

\begin{rem}  Let $p$ be an odd prime.
In \cite[cor.~5.4.2]{Wall_Poincare}, a construction is given of a connected finitely dominated Poincar\'e duality space $K$ of dimension 4
with fundamental group $\pi = \Bbb Z/p\Bbb Z$. For a suitable choice of  $p$, the finiteness obstruction $\tilde w(K) \in \tilde K_0(\Bbb Z[\pi])$
has odd order (for example, $p = 97$ will do, see \cite[p.~31]{Milnor_K-theory}).  Then if $j \ge 3$, 
the product $K\times S^{2j-4}$ is a connected finitely dominated Poincar\'e duality space of dimension $2j$
having non-trivial finiteness obstruction (this uses \cite[thm.~0.1]{Gersten_product}). For other examples see  \cite{Klein_pdfd} .
\end{rem}

More generally, one has the notion of a {\it Poincar\'e pair} $(N,\partial N)$
with fundamental class $[N] \in H_d(N,\partial N; \cal L)$. In this case
one assumes that the cap product
\[
\cap [N] \: H^\ast(N;\cal B) \to H_{d-\ast}(N,\partial N;\cal L\otimes\cal B)
\]
is an isomorphism
and in addition
the image of $[N]$ with respect to the boundary homomorphism
$H_d(N,\partial N; \cal L) \to H_{d-1}(\partial N; \cal L_{|\partial L})$ 
equips $\partial N$ with the structure of a Poincar\'e space of dimension $d-1$.

Given a spherical fibration $\xi\: S(\xi) \to M$, we write $D(\xi) \to M$ for 
the mapping cylinder. The {\it Thom space} is the quotient
\[
M^\xi := D(\xi)/S(\xi)\, .
\]
Note that the first Stiefel-Whitney class of $\xi$ defines an orientation sheaf $\cal L^\xi$. 
Moreover, by the Thom isomorphism, $H_\ast(M^\xi) \cong \tilde H_{\ast-k}(M)$.

A {\it Spivak normal fibration} for a Poincar\'e duality space $M$ of dimension $d$ consists of
a pair $(\nu,c)$ in which $\nu$ is a $(k-1)$-spherical fibration for some $k$
and 
\[
c\: S^{d+k} \to M^\nu
\]
 is a degree one map  in the sense that the image $c_\ast([S^{d+k}]) \in H_{d+k}(M^\xi) \cong H_d(M;\cal L^\xi)$
is a fundamental class for $M$. The pair $(\nu,c)$ always exists and is unique up to stable fiber homotopy equivalence
(see e.g., \cite{Klein_dualizing}). The map $c$ is sometimes called the {\it normal invariant}.

\subsection{Poincar\'e embeddings of the diagonal}
Suppose $M$ is a connected finitely dominated Poincar\'e duality  space of dimension $d$. If $(\cal L,[M])$
is a choice of orientation and fundamental class for $M$, then $(\cal L^{\times 2},[M]^{\times 2})$
is one for $M\times M$. In particular $M\times M$ is a Poincar\'e duality space of dimension $2d$.
 
A {\it diagonal Poincar\'e embedding} of $M$
consists of a map of (finitely dominated) spaces $S(\xi) \to C$ that fits into a commutative homotopy cocartesian square
\[
\xymatrix{
S(\xi) \ar[r] \ar[d]_{\xi} & C \ar[d]\\
M \ar[r]_(.4){\Delta} & M^{\times 2}\,
}
\]
in which 
\begin{itemize}
\item $\Delta$ is the diagonal map,
\item $\xi$ is a $(d-1)$-spherical fibration, 
\item $(C,S(\xi))$ is a Poincar\'e pair of dimension $2d$ with orientation sheaf
given by the restriction $\cal L^{\times 2}_{|C}$ and fundamental
class $[C]$ given by taking the image of $[M]^{\times 2}$ 
under the homomorphism 
\[
H_{2d}(M\times M; \cal L^{\times 2}) \to H_{2d}(M\times M, M; \cal L^{\times 2}) \cong H_{2d}(C,S(\xi);\cal L^{\times 2}_{|C})\, ,
\]
where the displayed isomorphism is given by excision.
\end{itemize}
 It follows that the local system associated to $\xi$ is $\cal L^{-1}$.
We refer the reader to \cite[\S5]{Klein_Poincare} or \cite[\S2]{Klein_diagonals} for the definition of Poincar\'e embeddings in full generality.

\subsection{$G$-spaces}
A topological group object  $G$ of $\Top$ is said to be {\it cofibrant} if its underlying space is. 
Let $\Top(G)$ and $\Top_\ast(G)$ denote the category of left $G$-spaces and based left $G$-spaces.
We remark here that a right $G$-space can always be converted to a left $G$-space using the involution of $G$ defined by 
$g \mapsto g^{-1}$.

The categories $\Top(G)$ and $\Top_\ast(G)$ inherit a model structure using the forgetful
functor to $\Top$. In both instances, all objects are fibrant. An object of  $\Top(G)$ is cofibrant
if it is a retract of a free $G$-cell complex, i.e., a
space built up from the empty space by free $G$-cell attachments, where a free $G$-cell is defined to be $D^n \times G$ for some $n \ge 0$.  
Similarly, an object of $\Top_\ast(G)$ is cofibrant if it is built up from a point by based free $G$-cell attachments.
In $\Top(G)$ and $\Top_\ast(G)$ one can speak of finite, homotopy finite, and finitely dominated objects.

According to Milnor \cite{Milnor_universal}, given a connected based simplicial complex $X$, one may associate a cofibrant topological group $G$ and 
a universal $G$-principal fiber bundle $\tilde X \to X$. 
 As $\tilde X$ is contractible, it follows that $X$ is identified with $BG$ up to homotopy. On the other hand,
any connected space $Y$ is weakly equivalent to a simplicial complex $X$. It follows that $Y$ is weakly equivalent to $BG$.

\begin{rem} In this case, one can identify the homotopy category of $\Top(G)$ with $\Top/X$, that is spaces over $X \simeq BG$. Concretely, a $G$-space $E$ is sent to $E_{hG} \to \star_{hG} = BG$, whereas to a map $f \colon Z \to X$ we assign the $G$-space $\tilde X \times_X Z$ which models the homotopy fiber of $f$ as a $G$-space.
\end{rem}

If $X, Y\in \Top_\ast(G)$ are objects, we write
\[
[X,Y]_G
\]
for the set of homotopy classes of based $G$-maps $X\to Y$.  Similarly, we write
\[
\{X,Y\}_G
\]
for the abelian group of homotopy classes of stable $G$-maps $X\stableto  Y$.

\subsection{Naive $G$-spectra} 
We will also be considering the category $\text{\rm Sp}(G)$ of (naive) $G$-spectra formed from objects of $\Top_\ast(G)$.
A $G$-spectrum $E$ in this sense consists of based $G$-spaces $E_n$ for $n \ge 0$ together with based $G$-maps
$\Sigma E_n \to E_{n+1}$ (structure maps), where $G$ acts trivially on the suspension coordinate of $\Sigma E_n$. 
A morphism $f\: E\to E'$ of $G$-spectra consists of based $G$-maps of $f_n \: E_n \to E_n'$ for all $n$ which
are compatible with the structure maps. 

Then $\text{\rm Sp}(G)$ forms a model category in which 
the fibrant objects are the $\Omega$-spectra and the weak equivalences are the maps which induce
isomorphisms on homotopy groups after applying fibrant replacement \cite{Schwede_model}.  

\begin{notation} We indicate
a weak equivalence $f\: E\to E'$ with the decoration $@> {}_\sim >>$.
If there is a finite chain of weak equivalences connecting $E$ to $E'$, then by 
 slight notational abuse, we write $E \simeq E'$. 
 \end{notation}

 Let $S$ denote the sphere spectrum.
Given any $G$-space $X$ its suspension spectrum $\Sigma^\infty (X_+):= S \smsh X_+$, is naturally a $G$-spectrum. We will sometimes denote it by
\[
S[X]\, .
\]

\begin{rem} It is sometimes convenient to think of a $G$-spectrum as a collection of based $G$-spaces $E_V$ 
indexed over finite dimensional inner product
spaces $V$, together with $G$-maps $S^W \smsh E_V \to E_{V\oplus W}$.
\end{rem}

Given a based $G$-space $X$ and a  $G$-spectrum $E$, we may form the spectrum of equivariant maps from $X$ to $E$:
\[
F_G(X,E)\, .
\]
The $n$-th space of $F_G(X,E)$ is given by the function space $F_G(X,E_n)$ consisting of based $G$-maps  
$X\to E_n$.   For $F_G(X,E)$ to have a sensible homotopy type, one should assume that $X$ is cofibrant and that $E$ is fibrant.
Similarly, one may form
\[
X \smsh_G E\, ,
\]
which is the spectrum having $n$-space $X\smsh_G E_n$, i.e., the orbits of $G$ acting diagonally on the smash product $X\smsh E_n$.
For this construction to have a sensible homotopy type, we should assume that $X$ and $E$ are cofibrant.
We will implicitly apply fibrant/cofibrant replacement to arrange that the above constructions are always homotopy invariant.

As a special case of the above,
let $EG$ be a cofibrant free contractible $G$-space. Then the {\it homotopy fixed points} and {\it homotopy orbits} of $G$ acting on $E$ are given by
\[
E^{hG} := F_G(EG_+,E) \quad \text{and} \quad E_{hG} := E\smsh_G EG_+\, .
\]

\subsubsection{Function spectra and smash products}  
By resorting to one of the standard enriched model categories of spectra \cite{Smith_symmetric},\cite{EKMM}, we can form internal hom objects
\[
\hom(X,Y)
\]
for naive spectra $X$ and $Y$. 

Note that $X$ should be cofibrant and $Y$ should be fibrant for the function spectrum to be homotopy
invariant. If $X$ and $Y$ are naive $G$-spectra in one of these categories, then $\hom(X,Y)$ is equipped with a $(G\times G)$-action
and restricting to the $G$-equivariant functions gives the spectrum
\[
\hom_G(X,Y)\, .
\]
When $X = \Sigma^\infty Z$ is a suspension spectrum of a (cofibrant) $G$-space, then one  has an identification
\[
\hom_G(X,Y) \simeq F_G(Z,Y)\, .
\]
Similarly, the smash product $X\smsh Y$ has the structure of a $(G\times G)$-spectrum and the orbits under the diagonal action
of $G$ defines a spectrum $X\smsh_G Y$. Moreover, $X\smsh_G Y \simeq Z\smsh_G Y$ when $X= \Sigma^\infty Z$.

\subsection{The dualizing spectrum} The main reference
for this subsection is \cite{Klein_dualizing}. 
Let $G$ be a cofibrant topological group.
Let $G^\lr$ denote $G$ with the left action of $G\times G$ given by $(g,h)\cdot x = gxh^{-1}$.


\begin{rem} The $(G \times G)$-spectrum $S[G^\lr] $ has the following useful property: given any $G$-spectrum $E$, let $E^\ell$ and $E^r$ denote $E$ as a $G \times G$-spectrum with trivial right/left action, respectively. Then one has a natural isomorphism of $(G \times G)$-spectra
\[
E^\ell \smsh S[G^\lr] \cong E^r \smsh S[G^\lr]\, .
\]
If we write $G_\ell$ in place of $G\times 1$ and $G_r$ in place of $1\times G$, then
we immediately infer that
\[
(E^\ell \smsh S[G^\lr])_{hG_\ell} \simeq  E \simeq (E^r \smsh S[G^\lr])_{hG_r}
\]
as $G$-spectra and compatible with the above identification. Similarly, if $E$ is a $(G \times G)$-spectrum, we obtain a $G$-spectrum $E^{\ad}$ by restricting to the diagonal action. We then obtain natural equivalences
\begin{equation}\label{eqn:untwist}
(E \smsh S[G^\lr])_{hG_\ell} \simeq E^{\ad} \simeq (E \smsh S[G^\lr])_{hG_r}.
\end{equation}
\end{rem}

\begin{ex}  Let $G^\ad$ denote $G$ equipped with its conjugation action $g\cdot x= gxg^{-1}$. We set
\[
S[G^\ad] := S \smsh (G^\ad_+)\, .
\]
Then $S[G^\ad] = S[G^{\lr}]^{\ad}$.
\end{ex}

The {\it dualizing spectrum} of $G$
is the $G$-spectrum
\begin{equation} \label{eqn:dualizing}
\cal D_G := S[G^\lr]^{hG_\ell} = F_{G_\ell}(EG_+,S[G^\lr])\, ,
\end{equation}
in which $G$ acts as $G_r$.

For any $G$-spectrum $E$, one has a {\it norm map}
\begin{equation} \label{eqn:norm-map}
\eta = \eta_E \: \cal D_G \smsh_{hG} E \to E^{hG}
\end{equation}
which is natural in $E$.  The map $\eta$ arises by noting that composition of functions
gives rise to a pairing
\begin{equation} \label{eqn:norm-compostion}
F_{G_\ell}(EG_+,S[G^\lr]) \smsh \hom_{G_\ell}(S[G^\lr],E) \to F_{G_\ell}(EG_+,E) 
\end{equation}
which is $G$-invariant, and therefore factors through homotopy orbits. 
The norm map is then obtained using the evident weak equivalence $E \simeq \hom_{G_\ell}(S[G^\lr],E)$. We recall the following
\begin{thm}[Theorem D in \cite{Klein_dualizing}]\label{them:nmequiv}
	If $BG$ is finitely dominated, then the norm map
	\[
	\eta_E \: \cal D_G \smsh_{hG} E = (\cal D_G \smsh E)_{hG} \to E^{hG}
	\]
	is an equivalence for any $G$-spectrum $E$.
\end{thm}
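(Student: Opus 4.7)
The plan is to reduce the claim to a single test case by a formal argument and then verify that case by direct computation. Both $E \mapsto (\mathcal{D}_G \smsh E)_{hG}$ and $E \mapsto E^{hG}$ are functors $\Sp(G) \to \Sp$, $\eta_E$ is a natural transformation between them, and each sends stable cofiber sequences to cofiber sequences. If both functors preserve arbitrary homotopy colimits in $E$, then it suffices to test $\eta_E$ on a family of compact generators of $\Sp(G)$, for which I will take the free $G$-spectrum $S[G^\ell] = \Sigma^\infty(G_+)$ and its shifts.

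The left-hand functor obviously preserves all homotopy colimits, since smashing with a fixed spectrum and forming homotopy orbits both do. For the right-hand functor, $E^{hG} = F_G(EG_+, E)$ commutes with homotopy colimits whenever $EG_+$ is equivariantly finitely dominated as a free $G$-cell complex. Since the covering projection $EG \to BG$ lets a finite CW structure on $BG$ be lifted to a finite free $G$-CW structure on $EG$, this holds under our hypothesis that $BG$ is finitely dominated. This is the only place the hypothesis enters.

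For the test case $E = S[G^\ell]$, I compute both sides as plain spectra, forgetting the residual $G_r$-action on $\mathcal{D}_G$. On the left, the diagonal $G$-action on $\mathcal{D}_G \smsh S[G^\ell]$ is free in the $G_+$-coordinate, so homotopy orbits agree with strict orbits, and the untwisting map $d\smsh g \mapsto g^{-1}\cdot d$ induces an equivalence
\[
(\mathcal{D}_G \smsh S[G^\ell])_{hG} \simeq \mathcal{D}_G.
\]
On the right, the restriction of the $(G\times G)$-spectrum $S[G^\lr]$ along the inclusion $G_\ell \hookrightarrow G\times G$ is literally $S[G^\ell]$, so $F_G(EG_+, S[G^\ell]) = \mathcal{D}_G$ by the definition \eqref{eqn:dualizing}. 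A diagram chase using the composition pairing \eqref{eqn:norm-compostion} together with the untwist equivalence \eqref{eqn:untwist} then shows that, under these identifications, $\eta_{S[G^\ell]}$ is homotopic to the identity on $\mathcal{D}_G$.

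The principal technical difficulty is the careful bookkeeping of left versus right $G$-actions in the final step: one must verify that the two occurrences of $\mathcal{D}_G$ above really are identified via the same homotopy class of map, so that $\eta_{S[G^\ell]}$ collapses to the identity rather than to some nontrivial self-equivalence of $\mathcal{D}_G$.
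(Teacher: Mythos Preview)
The paper does not give its own proof of this statement; it is quoted verbatim as Theorem~D of \cite{Klein_dualizing} and used as a black box. So there is nothing in the present paper to compare against, and your argument should be judged on its own merits.

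Your overall strategy is the standard one and is correct: both sides are exact functors of $E$, the source commutes with all homotopy colimits, and the hypothesis on $BG$ forces the target to do so as well, reducing the question to the generator $E=S[G^\ell]$, where both sides are visibly $\mathcal D_G$. This is essentially how the result is proved in \cite{Klein_dualizing}.

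One imprecision is worth flagging. You write that ``a finite CW structure on $BG$ [can] be lifted to a finite free $G$-CW structure on $EG$,'' but the hypothesis is only that $BG$ is \emph{finitely dominated}, not homotopy finite, so $BG$ need not admit a finite CW structure at all. The fix is routine: choose a homotopy retraction $BG\xrightarrow{i} X\xrightarrow{r} BG$ with $X$ a finite complex, pull back the universal bundle along $r$ to obtain a finite free $G$-CW complex $\tilde X$, and observe that $EG_+$ is then a retract of $\tilde X_+$ in the $G$-equivariant stable homotopy category. Retracts of compact objects are compact, so $F_G(EG_+,-)$ still preserves filtered homotopy colimits. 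With this adjustment your reduction step goes through, and the remainder of the argument (the identification of $\eta_{S[G^\ell]}$ with the identity of $\mathcal D_G$ via \eqref{eqn:untwist} and \eqref{eqn:norm-compostion}) is correct, modulo the bookkeeping you already acknowledge.
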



Note that the norm map is adjoint to a map of $G$-spectra
\[
\cal D_G \smsh_{hG} E \smsh EG_+ \to E,
\]
This map can be obtained from the evaluation map
\begin{equation}\label{eqn:epsilon}
\epsilon\: \cal D_G\smsh EG_+ \to S[G^\lr]
\end{equation}
which is $(G\times G)$-equivariant.  The map $\epsilon$ is defined so that it is
adjoint to the identity map of $\cal D_G$, i.e., it is given by the evaluating
stable maps $EG_+ \to S[G^\lr]$ at points of $EG_+$.

\begin{ex} In the case $E = S$, with $G$ acting trivially, the norm equivalence is
\[
\eta \: \left( \cal D_G \right)_{hG} \xrightarrow{\simeq} S^{hG} = F(BG_+,S)\, .
\]
Hence, the unit $BG_+ \to S$ 
corresponds to  a map $S \to \left( \cal D_G \right)_{hG}$. 

This relates to the classical normal invariant of the Spivak normal fibration as follows:
Suppose that $M$ is a (connnected, finitely dominated) Poincar\'e duality space of dimension $d$.  Let $\tilde M \to M$ be a universal principal
bundle over $M$ with topological structure group $G$. Then $\tilde M$ is a free, contractible $G$-space.
We may assume that $G$ is cofibrant. Then $\tilde M$  models $EG$ and $M$ models $BG$. In this case
$\cal D_G$ is unequivariantly weakly equivalent to a sphere of dimension $-d$, and we write 
\[
S^{-\tau} := \cal D_G
\]
in this instance. Then the stable spherical fibration
\[
S^{-\tau} \times_G \tilde M \to M
\]
is the Spivak normal fibration of $M$.\footnote{Here we are considering the Spivak normal fibration as a parametrized spectrum over $M$.} In this context, the Thom spectrum $M^{-\tau}$ is identified with $S^{-\tau}_{hG} = S^{-\tau} \smsh_G (\tilde M_+)$, and
the norm equivalence in this notation is just Atiyah duality \cite{Atiyah_thom} for Poincar\'e duality spaces:
\[
M^{-\tau} \simeq F(M_+,S)\, .
\]
Summarizing, with respect to $E = S$, the norm equivalence specializes to Atiyah duality and
 the stable normal invariant $S \to M^{-\tau}$ corresponds to the unit $1\: M_+ \to S$. 
 \end{ex}
 
In what follows we set
\[
S^\tau := \hom(S^{-\tau},S)\smsh EG_+\, .
\]
Then  $G=G_\ell$ acts on $S^\tau$ and 
\[
S^{\tau} \times_G \tilde M \to M
\]
is the stable {\it Spivak tangent fibration} of $M$. In what follows we extend the action
of $G = G_\ell$ on $S^\tau$ to $G\times G$ by letting $G_r$ act trivially.

\subsection{The umkehr map $\Delta_!$}
To the diagonal map $\Delta \: M \to M\times M$ we
associate a $(G\times G$)-equivariant {\it umkehr map}
\begin{equation} \label{eqn:umkehr}
\Delta_!\: EG_+ \smsh EG_+ \to S^\tau[G^\lr]\, ,
\end{equation}
where $S^\tau[G^\lr] := S^\tau\smsh G^\lr_+$.  
The umkehr map  is  defined as the evaluation map
\[
EG_+ \smsh F_{G_\ell}(EG_+,S^\tau[G^\lr])  \to S^\tau[G^\lr]\, ,
\]
in conjunction with the observation that the norm map with respect to $E= S^\tau[G^\lr]$ defines an equivalence
\[
EG_+ \simeq \cal D_G \smsh_{hG} S^\tau[G^\lr] @> \eta > ^\sim  >
F_{G_\ell}(EG_+,S^\tau[G^\lr]) \, .
\]
\begin{rem} If $M$ is a closed manifold, then the homotopy orbits
of $G\times G$ acting on $\Delta_!$ coincides with the map $M_+\smsh M_+ \to M^\tau$ given
 Pontryagin-Thom collapse of the embedding $\Delta$.
\end{rem}

We can now rephrase the norm equivalence in terms of $\Delta_!$. Given any $G$-spectrum $E$ we obtain a map of $G$-spectra
\[
EG_+ \smsh E_{hG} \cong (EG_+ \smsh EG_+ \smsh E^r)_{G_r} @> \Delta_! \smsh \id_E >> (S^\tau[G^\lr] \smsh E^r)_{G_r} \cong S^\tau \smsh E \, ,
\]
The proof of the following is straightforward, but tedious.  We leave its verification  to the reader.
\begin{lem}
The map adjoint to $\Delta_! \smsh \id_E$ defines a weak equivalence
\[
E_{hG} @> \sim >> (S^\tau \wedge E)^{hG}
\]
that is natural in $E$.
\end{lem}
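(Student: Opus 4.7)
The plan is to unwind the construction of $\Delta_!$ and identify the adjoint map
\[
\tilde\phi_E \: E_{hG} \to (S^\tau \smsh E)^{hG}
\]
with the norm equivalence of Theorem \ref{them:nmequiv}, applied to the $G$-spectrum $S^\tau \smsh E$. Since $BG \simeq M$ is finitely dominated, Theorem \ref{them:nmequiv} will then immediately deliver the desired weak equivalence. Naturality in $E$ will follow because $\Delta_! \smsh \id_E$, the untwisting identifications of \eqref{eqn:untwist}, the orbit functor, and the adjunction are all manifestly natural in $E$.

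First I would recall from \eqref{eqn:umkehr} that $\Delta_!$ is by definition the composite
\[
EG_+ \smsh EG_+ \xrightarrow{\id \smsh \eta} EG_+ \smsh F_{G_\ell}(EG_+, S^\tau[G^\lr]) \xrightarrow{\text{ev}} S^\tau[G^\lr],
\]
where $\eta = \eta_{S^\tau[G^\lr]}$ is the norm equivalence. I would then smash this diagram with $\id_{E^r}$ and take $G_r$-orbits. The source $(EG_+ \smsh EG_+ \smsh E^r)_{G_r}$ becomes $EG_+ \smsh E_{hG}$, since the leading $EG_+$ carries no $G_r$-action and the remaining orbit computes $E_{hG}$. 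The target $(S^\tau[G^\lr] \smsh E^r)_{G_r}$ becomes $S^\tau \smsh E$ by the untwisting identity \eqref{eqn:untwist} applied to the $(G\times G)$-spectrum $S^\tau \smsh E^r$.

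Next I would identify the resulting map with the norm-equivalence composite for $S^\tau \smsh E$. The same untwisting, applied inside the function spectrum, identifies $(F_{G_\ell}(EG_+, S^\tau[G^\lr]) \smsh E^r)_{G_r}$ with $F_{G_\ell}(EG_+, S^\tau \smsh E)$, and under this identification the $G_r$-orbit of $\id \smsh \eta$ becomes $\id \smsh \tilde\eta$, where $\tilde\eta \: E_{hG} \to (S^\tau \smsh E)^{hG}$ is the norm map for $S^\tau \smsh E$. Passing to the $EG_+ \smsh - \dashv F(EG_+,-)$ adjoint, the unit-counit identity collapses the composition with the evaluation map, so $\tilde\phi_E$ agrees with $\tilde\eta$. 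Theorem \ref{them:nmequiv} then shows $\tilde\phi_E$ is a weak equivalence.

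The main obstacle, as the authors themselves flag, is the meticulous book-keeping of the left and right $G$-actions in these identifications; in particular, one must verify that the untwisting isomorphism \eqref{eqn:untwist} commutes past the functor $F_{G_\ell}(EG_+, -)$ in the sense claimed above. Once that verification is granted, the argument reduces to recognizing the constructed map as the norm equivalence for $S^\tau \smsh E$ and invoking Theorem \ref{them:nmequiv}.
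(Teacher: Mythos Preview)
The paper itself supplies no proof of this lemma: it says only that the verification ``is straightforward, but tedious'' and is left to the reader. Your proposal is therefore not being compared against an existing argument but is rather a candidate for the omitted verification.

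Your strategy---unwind $\Delta_!$ as $\mathrm{ev}\circ(\id\smsh\eta)$, smash with $E^r$, take $G_r$-orbits using the untwisting \eqref{eqn:untwist}, and then recognise the adjoint as the norm map $\eta_{S^\tau\smsh E}$ so that Theorem~\ref{them:nmequiv} applies---is exactly the shape of the ``straightforward, but tedious'' computation the authors have in mind, and it is correct in outline. The identification $\cal D_G\smsh_{hG}(S^\tau\smsh E)\simeq E_{hG}$ (via $S^{-\tau}\smsh S^\tau\simeq S$) is what makes the norm map for $S^\tau\smsh E$ have the right source, and your observation about naturality is on point.

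One caution: the step where you assert that $(F_{G_\ell}(EG_+,S^\tau[G^\lr])\smsh E^r)_{G_r}$ is identified with $F_{G_\ell}(EG_+,S^\tau\smsh E)$, and that under this identification the $G_r$-orbit of $\id\smsh\eta$ becomes $\id\smsh\tilde\eta$, is precisely the tedious part and does not follow formally from \eqref{eqn:untwist} alone---commuting homotopy orbits past a function spectrum is not automatic. The clean way around this is to work instead with the adjoint description via the evaluation map $\epsilon$ of \eqref{eqn:epsilon}: the norm map for any $E'$ is obtained from $\epsilon\smsh\id_{E'}$, and comparing $\epsilon\smsh\id_{S^\tau\smsh E}$ with $(\Delta_!\smsh\id_E)_{G_r}$ directly (both are built from $\epsilon$) avoids the function-spectrum commutation. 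With that adjustment your argument goes through.
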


\section{The tangential Euler invariant}

Assume that $M$ is a finitely dominated Poincar\'e duality space of dimension $d$.
Suppose in addition that an ({\it un}stable) $(d-1)$-spherical fibration 
\[
\xi\: S(\xi) \to M
\]
is specified.
We also assume that $\xi$ comes equipped with a choice of stable fiber homotopy equivalence to the Spivak tangent fibration of $M$.

Let $D(\xi) \to M$ be the mapping cylinder of $\xi$. For any space $Y \to M$, let $\tilde Y := \tilde M \times_M Y$. Then $\tilde Y$ 
is an unbased $G$-space. Define
\[
S^\xi := \tilde D(\xi)/\tilde S(\xi)\, .
\]
Then $S^\xi$ is a based $G$-space which is unequivariantly the homotopy type of a $d$-sphere (note: $S^\xi$ models
the fiber of the unreduced fiberwise suspension of $\xi$).  

In particular, one has a diagonal map 
\[
\Delta_\xi\: S^\xi[G^\lr] \to S^\xi[G^\lr]\smsh S^\xi[G^\lr]\, ,
\]
which depends on $\xi$. In view of the stable identification $S^\xi \simeq S^\tau$, we resort to the notation
\[
\Delta_\xi\: S^\tau[G^\lr] \to S^\tau[G^\lr]\smsh S^\tau[G^\lr]
\]
when $\Delta_\xi$ is considered as a stable map.

\begin{defn} The {\it Euler invariant} $ \mathfrak e(\xi)$ is the $(G\times G)$-equivariant composition
\[
EG_+ \smsh EG_+ @> \Delta_! >> S^\tau[G^\lr] @> \Delta_\xi >> S^\tau[G^\lr]\smsh S^\tau[G^\lr]
\]
\end{defn}

\begin{lem}\label{lem:free-loop-ident} There is a preferred isomorphism of abelian groups
\[
 \{EG_+ \smsh EG_+, S^\tau[G^\lr]\smsh S^\tau[G^\lr]\}_{G\times G} \cong H_0(LM)\, ,
 \]
 where $LM := \text{\rm map}(S^1,M)$ is the free loop space of $M$.
 \end{lem}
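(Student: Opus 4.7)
The plan is to reduce the left-hand side to $\pi_0$ of a spectrum, which we will identify with $\Sigma^\infty_+ LM$. Since $EG \times EG$ is a model for $E(G \times G)$, for any $(G \times G)$-spectrum $Y$ one has
\[
\{EG_+ \smsh EG_+, Y\}_{G \times G} \;\cong\; \pi_0\, Y^{h(G \times G)}.
\]
Setting $Y = X := S^\tau[G^\lr] \smsh S^\tau[G^\lr]$ and recalling that $\pi_0 \Sigma^\infty_+ LM \cong H_0(LM)$ by Hurewicz, it suffices to produce a natural equivalence $X^{h(G \times G)} \simeq \Sigma^\infty_+ LM$.

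The starting point is a change of variables: the map $\phi\: G^\lr \times G^\lr \to G^\lr \times G^\ad$, $(u,v) \mapsto (u, vu^{-1})$, is a $(G \times G)$-equivariant homeomorphism, where on the target $G_\ell$ acts by left multiplication on the first factor and by conjugation on the second, while $G_r$ acts by right multiplication on the first factor and trivially on the second. Consequently
\[
X \;\cong\; (S^\tau \smsh S^\tau) \smsh (G^\lr_+ \smsh G^\ad_+)
\]
as $(G \times G)$-spectra. Now compute $X^{h(G \times G)} = (X^{hG_r})^{hG_\ell}$ iteratively. Since $G_r$ acts only on the $G^\lr_+$ factor, one gets $X^{hG_r} \simeq (S^\tau \smsh S^\tau) \smsh S[G^\lr]^{hG_r} \smsh G^\ad_+$. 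The inversion $g \mapsto g^{-1}$ on $G$ swaps the left and right actions on $G^\lr$, so the defining formula \eqref{eqn:dualizing} yields $S[G^\lr]^{hG_r} \simeq \cal D_G = S^{-\tau}$ as a $G_\ell$-spectrum. The equivariant duality pairing provides a $G_\ell$-equivariant equivalence $S^\tau \smsh S^{-\tau} \simeq S$, and therefore
\[
X^{hG_r} \simeq S^\tau \smsh G^\ad_+
\]
as a $G_\ell$-spectrum, with $G_\ell$ acting on $G^\ad$ by conjugation.

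Applying the lemma preceding this section ($E_{hG} \simeq (S^\tau \smsh E)^{hG}$) with $E = G^\ad_+$ then gives
\[
X^{h(G \times G)} \simeq (S^\tau \smsh G^\ad_+)^{hG_\ell} \simeq (G^\ad_+)_{hG_\ell} \simeq \Sigma^\infty_+ (EG \times_G G^\ad),
\]
and the standard identification $LBG \simeq EG \times_G G^\ad$ (the Borel construction of the conjugation action is a model for the free loop space of $BG \simeq M$) completes the argument. The main delicate point is the middle step: tracking the residual $G_\ell$-action through the identification $S[G^\lr]^{hG_r} \simeq S^{-\tau}$ and verifying that the ensuing cancellation $S^\tau \smsh S^{-\tau} \simeq S$ is carried out $G_\ell$-equivariantly. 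Once this bookkeeping is done, the remaining equivalences are direct appeals to the norm equivalence of the preceding subsection or to well-known properties of the free loop space.
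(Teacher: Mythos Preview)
Your argument is correct and reaches the same destination as the paper, but the route is organized differently.

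The paper applies the norm equivalence for the product group $G\times G$ in one stroke,
\[
(S^\tau[G^\lr]\smsh S^\tau[G^\lr])^{h(G\times G)}\;\simeq\;(S^{-\tau}\smsh S^{-\tau})\smsh_{h(G\times G)}(S^\tau[G^\lr]\smsh S^\tau[G^\lr])\;\simeq\;(S[G^\lr\times G^\lr])_{h(G\times G)}\,,
\]
and then untwists the orbit spectrum using the inclusion $1\times G^\lr\hookrightarrow G^\lr\times G^\lr$, which induces an equivalence $S[G^\ad]_{hG}\simeq (S[G^\lr\times G^\lr])_{h(G\times G)}$. Your proof instead untwists \emph{first} via the shear $(u,v)\mapsto(u,vu^{-1})$ and then takes homotopy fixed points iteratively, one copy of $G$ at a time, invoking the norm (in the form $S[G^\lr]^{hG_r}\simeq S^{-\tau}$ and $(S^\tau\smsh E)^{hG}\simeq E_{hG}$) at each stage. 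The shear map you use and the paper's inclusion $1\times G^\lr$ are two faces of the same untwisting; both arguments end at $S[G^\ad]_{hG}\simeq\Sigma^\infty_+LM$.

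The paper's single application of the norm for $G\times G$ avoids the bookkeeping you flag as ``delicate'' (tracking the residual $G_\ell$-action through $S[G^\lr]^{hG_r}\simeq S^{-\tau}$ and the equivariant cancellation $S^\tau\smsh S^{-\tau}\simeq S$). Your iterated approach, on the other hand, makes more explicit how the two copies of $S^\tau$ are absorbed one at a time, and it appeals directly to Lemma~2.8 rather than re-deriving it. Either way the content is the same.
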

 
 \begin{rem} \label{rem:free-loop-ident} The group $H_0(LM)$ is canonically isomorphic to
$\Bbb Z\langle\bar \pi\rangle$.
\end{rem}

 \begin{proof}[Proof of Lemma \ref{lem:free-loop-ident}]  The norm map applied to the $(G\times G)$-spectrum $S^\tau[G^\lr]\smsh S^\tau[G^\lr]$
 defines an equivalence
\small \begin{equation} \label{eqn:reduction-free-loop}
(S^{-\tau}{\smsh} S^{-\tau}) \smsh_{h G^{\times 2}} (S^\tau[G^\lr]\smsh S^\tau[G^\lr]) \simeq  (S^\tau[G^\lr]\smsh S^\tau[G^\lr])^{hG^{\times 2}}\, .
 \end{equation}
 \normalsize
As $S\simeq_{G} S^{-\tau} \smsh S^\tau$,  the left side of \eqref{eqn:reduction-free-loop} coincides up to homotopy with
\[
S[G^\lr] \smsh_{hG^{\times 2}} S[G^\lr] \simeq (S[G^\lr \times G^\lr])_{hG^{\times 2}}\, .
\]
Consider the map $G^\lr \to G^\lr \times G^\lr$ given by the inclusion of $1\times G^\lr$. It is straightforward to check that the induced map
of spectra
\[
S[G^\lr]_{hG} \to S[G^\lr] \smsh_{hG^{\times 2}} S[G^\lr] 
\]
is a weak equivalence. But it is well known that $S[G^\lr]_{hG}$ coincides with $\Sigma^\infty (LM)_+$ (see e.g., \cite[lem.~9.1]{Klein-Schochet-Smith}). 
Hence, we have defined a weak equivalence of spectra
\[
(S^\tau[G^\lr]\smsh S^\tau[G^\lr])^{hG^{\times 2}} \simeq \Sigma^\infty (LM)_+ \, .
\]
The conclusion of the Lemma follows by taking $\pi_0$.
\end{proof}

In view of Lemma \ref{lem:free-loop-ident}, we infer that
\[\mathfrak e(\xi) \in H_0(LM) = \Bbb Z\langle \bar\pi \rangle\, .
\]

\begin{prop} \label{prop:free-loop-ident}
For any $\xi$ as above, $\mathfrak e(\xi)$ lies in the summand defined by the conjugacy class of the trivial 
element of $\pi$.
\end{prop}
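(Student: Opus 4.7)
The plan is to decompose the diagonal $\Delta_\xi$ so that the sphere factor and the bimodule factor $G^{\lr}$ contribute separately, and then to observe that only the latter determines in which component of $LM$ the Euler invariant lands.

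First I would write $S^\xi[G^{\lr}] = S^\xi \smsh G^{\lr}_+$ and use the fact that the reduced diagonal of a smash product of based spaces is, up to a swap of the middle two smash factors, the smash of the individual reduced diagonals:
\[
\Delta_\xi \,\simeq\, \Delta_{S^\xi} \smsh \Delta_{G^{\lr}},
\]
where $\Delta_{S^\xi}\colon S^\xi \to S^\xi \smsh S^\xi$ is the reduced diagonal of $S^\xi$ and $\Delta_{G^{\lr}}\colon G^{\lr}_+ \to G^{\lr}_+ \smsh G^{\lr}_+$ is induced by the group-theoretic diagonal $g\mapsto(g,g)$. Both factors are manifestly $(G\times G)$-equivariant: the first because $G_r$ acts trivially on $S^\xi$, so the product $(G\times G)$-action on $S^\xi \smsh S^\xi$ coincides with the diagonal $G_\ell$-action; the second because $(a,b)\cdot(g,g) = (agb^{-1},agb^{-1})$ stays on the diagonal of $G^{\lr}\times G^{\lr}$.

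Next I would retrace the identifications in the proof of Lemma~\ref{lem:free-loop-ident} to see how the conjugacy-class decomposition of $H_0(LM)$ manifests on the $G^{\lr}\times G^{\lr}$ side. Under the $(G\times G)$-action $(a,b)\cdot(g_1,g_2) = (ag_1b^{-1},ag_2b^{-1})$, the invariant $g_1g_2^{-1}$ is preserved up to conjugation by $a$, and a direct calculation shows that $(g_1,g_2)\mapsto[g_1g_2^{-1}]$ is a bijection from the orbit space to $\bar\pi$: indeed, $(g_1,g_2)\sim(g_1g_2^{-1},e)$ via $(1,g_2)$, and then $(c,e)\sim(aca^{-1},e)$ via $(a,a)$. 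This is precisely what indexes the components of $LM$ on the $G^{\lr}$-side of the equivalence $S[G^{\lr}\times G^{\lr}]_{hG^{\times 2}}\simeq \Sigma^\infty (LM)_+$. On diagonal pairs $(g,g)$ the invariant degenerates to $gg^{-1}=e$, so the image of $\Delta_{G^{\lr}}$ meets only the trivial-conjugacy-class component.

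Combining these two observations: since $\Delta_\xi$ factors through $\Delta_{G^{\lr}}$ in its $G^{\lr}$-coordinate, the composition $\Delta_\xi\circ\Delta_!$ is supported, under the isomorphism of Lemma~\ref{lem:free-loop-ident}, on the $\Z\langle e\rangle$-summand of $\Z\langle\bar\pi\rangle$. The main obstacle will be keeping careful track of the $(G\times G)$-actions across the chain of equivalences in the proof of Lemma~\ref{lem:free-loop-ident}—in particular the untwisting equivalence \eqref{eqn:untwist} used implicitly there—but once the component structure is correctly transported, the conclusion reduces to the elementary observation that a diagonal pair in $G^{\lr}\times G^{\lr}$ is always equivalent under the prescribed action to $(e,e)$.
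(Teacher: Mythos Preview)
Your proposal is correct and takes essentially the same approach as the paper: both factor $\Delta_\xi$ as (a shuffle of) $\delta_\xi \smsh \Delta_{G^{\lr}_+}$ and then argue that the $G^{\lr}$-diagonal forces the class into the trivial-conjugacy-class summand. The paper packages this by defining a homomorphism $\phi\colon\{S^\tau,S^\tau\smsh S^\tau\}_G \to \Bbb Z\langle\bar\pi\rangle$ sending $f\mapsto (f\smsh\Delta_{G^{\lr}_+})\circ\Delta_!$, identifies its domain (via the norm equivalence) with $\pi_0^{\text{st}}(M)=\Bbb Z$, and asserts that $\phi$ is the inclusion $\Bbb Z\langle e\rangle\hookrightarrow\Bbb Z\langle\bar\pi\rangle$; your direct orbit computation on $G^{\lr}\times G^{\lr}$ is precisely what underlies that last assertion, and you avoid the extra (unneeded) step of computing the domain of $\phi$.
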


\begin{rem}  The proposition is equivalent to the assertion that
$\mathfrak e(\xi) \in H_0(LM)$ factors through $H_0(M)$. Alternatively,
the image of $\mathfrak e(\xi)$ in $H_0(LM,M)$ is trivial.
\end{rem}

\begin{proof} Consider the homomorphism
\[
\phi\:\{S^\tau,S^\tau\smsh S^\tau\}_{G} \to \{EG_+\smsh EG_+,S^\tau[G^\lr]\smsh S^\tau[G^\lr]\}_{G\times G} 
\]
induced by  assigning to a stable $G$-map $f\: S^\tau \to S^\tau\smsh S^\tau$
the stable $(G\times G)$-map
\[
EG_+ \smsh EG_+ @> \Delta_! >> S^\tau \smsh G_+^\lr @> f \smsh \Delta_{G_+^\lr} >> S^\tau\smsh S^\tau \smsh G_+^{\lr} \smsh G_+^{\lr}\, .
\]
Then $\Delta_\xi = \phi(\delta_\xi)$, where $\delta_\xi\: S^\xi\to S^\xi \smsh S^\xi$ 
is the diagonal map of $S^\xi$. Furthermore, smashing with $S^\tau$ defines  an isomorphism
\[
\{S,S^\tau\}_{G} \cong \{S^\tau,S^\tau\smsh S^\tau\}_{G}\, .
\]
On the other hand, the norm map applied to
$S^\tau$ defines an equivalence 
\[
\Sigma^\infty M_+ \simeq S_{hG} \simeq \cal D_G \smsh_{hG} S^{\tau} @> \eta > ^\sim> (S^{\tau})^{hG}
\]
where in the above $G$ acts trivially on $S$ and recall that $\cal D_G \simeq S^{-\tau}$.
Taking $\pi_0$, we obtain an
isomorphism 
\[
\{S,S^\tau\}_{G} \cong \pi^{\text{st}}_0(M) = \Bbb Z\, .
\]
With respect the the identifications, the homomorphism  $\phi$
corresponds to inclusion $\Bbb Z\langle e\rangle \to \Bbb Z\langle\bar \pi \rangle$.
\end{proof}

\begin{rem} With slightly more effort, it is possible to show that 
\[
\mathfrak e(\xi) = e(\xi) \cap [M] \in H_0(M)\, ,
\]
where $e(\xi)  \in H^d(M, \cal L^{-1})$  is the  (twisted) Euler class of the spherical fibration
$\xi$.
\end{rem}

\section{Self-intersection}
 \label{sec:cup}

\begin{defn} Let $M$ be a finitely dominated Poincar\'e duality space as above.
The {\it self-intersection invariant}  of  $M$ is the 
 homology class
\[
I(M) \in H_0(LM)
\]
given by the $G$-equivariant stable homotopy class of the composition
\[
(EG_+)^{[2]} @> \Delta >> (EG_+)^{[2]} \smsh (EG_+)^{[2]} @> \Delta_! \smsh \Delta_!>> S^\tau[G^\lr] \smsh S^\tau[G^\lr]\, ,
\]
where we use the identification of Lemma \ref{lem:free-loop-ident}.
\end{defn}

\subsection{The Reidemeister characteristic} The {\it Reidemeister characteristic} of $M$ 
is the homology class
\[
r(M) \in H_0(LM)
\] 
which is given by the stable homotopy
class of the composition of
maps of the form
\[
S \to M^{-\tau} \stableto (LM)_+  \, .
\]
The first map in this composition is the stable normal invariant
and the second map is defined as follows:
recall that the evaluation map 
\[
\epsilon\: S^{-\tau} \smsh EG_+ \to S[G^\lr]
\]
is $(G\times G)$-equivariant.
We take homotopy orbits of $\epsilon$ with respect to the diagonal subgroup  $G \subset G\times G$ to obtain the map
\[
M^{-\tau} \simeq S^{-\tau}_{hG} \to S[G^\ad]_{hG} \simeq S\smsh ((LM)_+)\, .
\]
(compare \cite{Berrick-Hesselholt}, \cite{Berrick-idempotents}).

\begin{rem} As $M$ is finitely dominated, there is a homotopy finite space $X$ and a factorization of the identity map
\[
M @> s >> X @>r >> M\, .
\]
Setting $f = s\circ r\: X \to X$, it follows that $f^{\circ 2} = f$, so $f$ is idempotent.  
Moreover, if $\pi = \pi_1(M)$, and $u \: M \to B\pi$ classifies the universal cover, then one has the composition
\[
v \: X @> r >> M @> u >> B\pi\, .
\]
Consequently, $v \circ f = v$, i.e., $f$ is a map over $B\pi$. Then the generalized Lefschetz trace $L(f) \in \Bbb Z\langle\bar \pi\rangle = H_0(LM)$ is defined 
(cf.\cite[\S4]{Berrick-idempotents}, \cite{Geoghegan_Nielsen}).
It is not hard to show that 
\[
r(M) = L(f)\, .
\] However, we will not make use of this fact.
\end{rem}

We next consider the norm equivalence
\[
\eta\: S[G^\ad]_{hG} @> \sim >> S^\tau[G^\ad]^{hG} := F_G(EG_+, S^\tau[G^\ad])
\]
for the $G$-spectrum $S^\tau[G^\ad]$. Recall that $S^\tau[G^\ad]$ is $S^\tau[G^{\lr}]$
when considered as a $G$-spectrum.

The following result is essentially an unravelling of the definitions. We omit the proof.

\begin{lem}\label{lem:Rtraceiseuler}
With respect to the above norm equivalence,
the Reidemeister characteristic $r(M)$ is represented by the $G$-equivariant stable composite
\[
EG_+ @> \Delta >> EG_+ \smsh EG_+ @> \Delta_! >> S^\tau[G^{\lr}]\, .
\]
\end{lem}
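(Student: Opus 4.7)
My plan is to verify the lemma by tracing through the definitions. The key observation is that all three ingredients---the Reidemeister characteristic $r(M)$, the norm equivalence $\eta$, and the umkehr map $\Delta_!$---are built from the single $(G\times G)$-equivariant evaluation map $\epsilon \colon \cal D_G \smsh EG_+ \to S[G^\lr]$ of \eqref{eqn:epsilon}. The proof therefore amounts to checking that these three manifestations of $\epsilon$ fit together coherently.

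First I would unpack $r(M)$: by definition it is represented by the composite
\[
S \xrightarrow{\iota} M^{-\tau} \simeq (\cal D_G)_{hG} \xrightarrow{\epsilon_{hG_{\text{diag}}}} S[G^\ad]_{hG},
\]
where $\iota$ is the Spivak normal invariant. Next I would unpack $\Delta_!$ via its construction: it is the $(G\times G)$-equivariant map that realizes the evaluation pairing $EG_+ \smsh F_{G_\ell}(EG_+, S^\tau[G^\lr]) \to S^\tau[G^\lr]$, after identifying the second factor with $EG_+$ through the norm equivalence for $E = S^\tau[G^\lr]$. In effect, $\Delta_!$ is obtained from $\epsilon$ by smashing with the identity of $S^\tau$ and then invoking the (equivariant) identification $\cal D_G \smsh S^\tau \simeq S$ that converts $\cal D_G \smsh EG_+ \smsh S^\tau$ into $EG_+ \smsh EG_+$.

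With both $r(M)$ and $\Delta_!$ expressed in terms of $\epsilon$, the remaining step is to compute the image of $[\Delta_! \circ \Delta]$ under the inverse of the norm equivalence $\eta$ for $E = S^\tau[G^\ad]$. After restricting $\Delta_!$ to the diagonal $G$-action and precomposing with $\Delta \colon EG_+ \to EG_+ \smsh EG_+$, the inverse adjunction---which is itself built from $\epsilon$ via the composition pairing described in the paper---produces a map $S \to S[G^\ad]_{hG}$. By the same ``smash with $S^\tau$ and apply $\cal D_G \smsh S^\tau \simeq S$'' procedure used to define $\Delta_!$, this map is identified term-by-term with $\epsilon_{hG_{\text{diag}}} \circ \iota$, i.e., with $r(M)$.

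The main obstacle is keeping track of the several $G$- and $(G\times G)$-actions involved---distinguishing $G_\ell$ from $G_r$, diagonal from adjoint, and verifying that the norm equivalence used to construct $\Delta_!$ (applied to $S^\tau[G^\lr]$ as a $G_\ell$-spectrum) is compatible with the norm equivalence $\eta$ appearing in the statement of the lemma (applied to $S^\tau[G^\ad]$). Since both are instances of the same general norm map, and both are ultimately driven by the single evaluation $\epsilon$, the required compatibility is essentially a naturality statement. Nevertheless, spelling it out requires a nontrivial bookkeeping exercise, which presumably explains the authors' decision to omit the proof.
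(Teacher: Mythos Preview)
Your proposal is correct and matches the paper's approach exactly: the authors write ``The following result is essentially an unravelling of the definitions. We omit the proof,'' and your outline is precisely that unravelling, correctly identifying that $r(M)$, $\Delta_!$, and the norm equivalence are all manifestations of the single evaluation map $\epsilon$ of \eqref{eqn:epsilon}, with the only real work being the equivariant bookkeeping you describe.
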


Let $E$ be any $(G\times G)$-spectrum. smashing $\Delta_!$ with $E$ and taking homotopy fixed points, we obtain a map
\[
\Delta_! ^E \: E^{h(G \times G)} \to (E\smsh S^{\tau}[G^\lr])^{h(G \times G)}\, .
\]
In the above, we have implicitly used the fact that the diagonal map 
 $\Delta_{EG_+ \smsh EG_+} \colon EG_+ \smsh EG_+ \to (EG_+ \smsh EG_+)^{[2]}$  is a
 $(G\times G)$-equivariant weak equivalence.
 
If we restrict $(G \times G)$-fixed points to $G$-fixed points along the diagonal inclusion $\Delta \colon G \to G \times G$, we  obtain a map
\[
\Delta_E^* \: E^{h(G \times G)} \to  (E^{\ad})^{hG}\,  .
\]
The following lemma in effect says that the maps $\Delta_! ^E $ and $\Delta_E^*$ coincide under the norm equivalence:

\begin{lem}\label{lem:triangle}
Let $E$ be any cofibrant $(G{ \times} G)$-spectrum. Then the following triangle commutes up to homotopy
\[
\begin{tikzcd}
E^{h(G \times G)} \ar[r, "\Delta_!^E"] \ar[rd, "\Delta^*_E" '] & (E \smsh S^\tau[G^\lr])^{h(G \times G)} \\
& (E^{\ad})^{hG} \ar[u, "\simeq" '] ,
\end{tikzcd}
\]
in which the vertical arrow is given by
\[
(E^{\ad})^{hG} \simeq ((E \smsh S[G^\lr])_{hG_\ell})^{hG} @> \eta ^{hG}>^\sim > ((E \smsh S^\tau[G^\lr])^{hG_\ell})^{hG} \cong (E \smsh S^\tau[G^\lr])^{h(G \times G)} \, ,
\]
where $\eta$  is the norm equivalence applied to $E \smsh S^\tau[G^\lr]$.
\end{lem}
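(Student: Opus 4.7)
The plan is to unravel both legs of the triangle and to appeal to the naturality of the norm equivalence in the spectrum variable. The key observation is that, by construction, $\Delta_!$ is nothing other than the evaluation map realizing the norm equivalence
\[
EG_+ \xrightarrow{\ \simeq\ } F_{G_\ell}(EG_+, S^\tau[G^\lr])
\]
for the $G_\ell$-spectrum $S^\tau[G^\lr]$. Consequently, smashing with $\Delta_!$ and taking $G_\ell$-homotopy fixed points implements, by naturality of $\eta$, the norm equivalence for the $G_\ell$-spectrum $E \smsh S^\tau[G^\lr]$.

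Concretely, I would first rewrite
\[
E^{h(G \times G)} \simeq (E^{hG_\ell})^{hG_r} \quad \text{and} \quad (E \smsh S^\tau[G^\lr])^{h(G \times G)} \simeq ((E \smsh S^\tau[G^\lr])^{hG_\ell})^{hG_r},
\]
and observe that $\Delta_!^E$ is induced, in the inner $G_\ell$-variable, by smashing with $\Delta_!$. Using the identification of $\Delta_!$ with an evaluation for the norm, this inner map becomes precisely the norm equivalence $\eta$ applied to $E \smsh S^\tau[G^\lr]$. Its source, by \eqref{eqn:untwist}, identifies $G_r$-equivariantly with $E^{\ad}$, and taking $G_r$-fixed points recovers $(E^{\ad})^{hG}$. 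The induced composite $E^{h(G \times G)} \to (E^{\ad})^{hG}$ is then recognizable as the diagonal restriction $\Delta_E^*$, because the $G_\ell$-norm, combined with untwisting via \eqref{eqn:untwist}, sends a $(G \times G)$-equivariant map to its pull-back along the diagonal $\Delta \colon G \hookrightarrow G \times G$. Inverting the norm and untwisting identifications yields the vertical arrow of the triangle.

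The main obstacle is the careful bookkeeping of the $G_\ell$-, $G_r$-, and diagonal actions through each equivalence; the swap in \eqref{eqn:untwist} that turns a left twist by $S[G^\lr]$ into the diagonal action on $E$ is what converts the diagonal restriction into the umkehr smashing. Once the equivariance data are matched up, the proof reduces to two formal facts: $\Delta_!$ was built from the norm equivalence, and the norm equivalence is natural in the spectrum variable. Together these force the triangle to commute, and the argument is, in the authors' own phrasing, essentially an unravelling of definitions.
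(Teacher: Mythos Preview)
Your strategy is sound and would lead to a correct proof, but it differs from the route the paper takes. The paper invokes the Yoneda lemma: since the diagram is natural in $E$ and the functor $E \mapsto E^{h(G\times G)}$ is corepresented by $(EG_+)^{[2]}$, it suffices to verify commutativity after precomposing with the unit $S \to ((EG_+)^{[2]})^{h(G\times G)}$ in the universal case $E = (EG_+)^{[2]}$. After using that $EG_+$ is the unit, both composites are then seen to be adjoint to the \emph{same} map, namely $\Delta_!$ itself, so the triangle commutes.

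Your approach instead unwinds the definitions directly for arbitrary $E$: you factor $(-)^{h(G\times G)}$ as iterated $G_\ell$- and $G_r$-fixed points, recognise the inner $G_\ell$-step of $\Delta_!^E$ as an instance of the norm equivalence (this is essentially Lemma~2.3 applied to $E\smsh S^\tau[G^\lr]$), and then use \eqref{eqn:untwist} to identify the source with $(E^{\ad})^{hG}$. This is perfectly valid, and conceptually it makes explicit \emph{why} the triangle commutes rather than reducing to a single test case. The cost is the equivariance bookkeeping you flag: the step asserting that the resulting composite $E^{h(G\times G)} \to (E^{\ad})^{hG}$ agrees with restriction along the diagonal $\Delta\colon G \hookrightarrow G\times G$ is the one place where your sketch is thin, and in a fully written proof you would need to chase a map $f\colon (EG_+)^{[2]}\to E$ through the norm and the untwisting of \eqref{eqn:untwist} to see that it lands on $f\circ\Delta_{EG_+}$. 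The paper's Yoneda reduction sidesteps precisely this chase. (Incidentally, the phrase ``essentially an unravelling of the definitions'' that you quote is attached in the paper to the preceding lemma, not to this one; the authors do give an explicit argument here.)
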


\begin{proof}
Note that the diagram is natural in $E$ and the functor $E\mapsto E^{h(G \times G)}$ is represented by 
$(EG_+)^{[2]} = EG_+ \smsh EG_+$, where to avoid clutter, we have identified $EG_+$ with $S\smsh EG_+$.
 
Consequently, by the Yoneda lemma, it suffices to show that the two composites
\[
\xymatrix{
S    \ar[r]^(.2)u & ((EG_+)^{[2]})^{h(G \times G)}  \ar[rr]^{\Delta_!^{EG_+^{[2]}}} \ar[drr]_{\Delta^\ast_{EG_+^{[2]}}}  &&  ((EG_+)^{[2]} \smsh S^\tau[G^\lr])^{h(G \times G)}  \\
& & &  ((EG_+^{[2]})^{\ad})^{hG}  . \ar[u]_\simeq
} 
\]
are homotopic. Using that $EG_+$ is the unit we can simplify the diagram to
\[
\begin{tikzcd}
	S \ar[r, "u"] & (EG_+ \smsh EG_+)^{h(G \times G)} \ar[r] \ar[rd] & (S^\tau[G^\lr])^{h(G \times G)} \\
	&& (EG_+)^{hG} \ar[u, "\simeq" ']\, ,
\end{tikzcd}
\]
where the  upper composite is adjoint to $\Delta_!$. Note that the unit $S \to (EG_+ \smsh EG_+)^{h(G \times G)}$ is mapped to the unit in $(EG_+)^{hG}$ under the diagonal map. Thus the lower composite is the unit $S \to (EG_+)^{hG}$ composed with the equivalence
\begin{multline*}
(EG_+)^{hG} \simeq ((EG_+ \smsh S[G^\lr])_{hG_\ell})^{hG} @>\eta^{hG}  >{}^\sim> ((EG_+ \smsh S^\tau[G^\lr])^{hG_\ell})^{hG} \cong \\ 
\cong (EG_+ \smsh S^\tau[G^\lr])^{h(G \times G)} \cong (S^\tau[G^\lr])^{h(G \times G)}\, .
\end{multline*}
Observe that the last equivalence is obtained from the norm equivalence
\[
\eta \: EG_+ \overset{\simeq}{\to} (S^\tau[G^\lr])^{hG_\ell}
\]
by passing to $G$-homotopy fixed points. Thus the lower composite is adjoint to the $(G \times G)$-equivariant map
\[
EG_+ \smsh EG_+ @> \id\smsh \eta >>  EG_+ \smsh (S^\tau[G^\lr])^{hG_\ell} \to S^\tau[G^\lr],
\]
which was the definition of $\Delta_!$ (here, the second displayed map is given by evaluation).
\end{proof}

%

\begin{prop}\label{prop:I=r} For any finitely dominated Poincar\'e duality space $M$, we have $I(M) = r(M)$.
\end{prop}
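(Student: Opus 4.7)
The plan is to deduce Proposition \ref{prop:I=r} as an immediate consequence of Lemma \ref{lem:triangle} applied to the $(G{\times}G)$-spectrum $E = S^\tau[G^\lr]$, together with Lemma \ref{lem:Rtraceiseuler} which rewrites $r(M)$ in the required form. The umkehr map $\Delta_!\colon EG_+ \wedge EG_+ \to S^\tau[G^\lr]$ is itself $(G{\times}G)$-equivariant, and since $EG_+ \wedge EG_+$ is $(G{\times}G)$-equivariantly equivalent to the sphere spectrum, it determines a canonical homotopy class
\[
[\Delta_!] \in \pi_0\bigl(E^{h(G{\times}G)}\bigr) = \pi_0\bigl(S^\tau[G^\lr]^{h(G{\times}G)}\bigr).
\]

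First I would compute $\Delta_!^E([\Delta_!])$. Unwinding the definition of $\Delta_!^E$, namely smashing with $\Delta_!$ and pre-composing with the diagonal weak equivalence $\Delta_{(EG_+)^{[2]}}$, the class $\Delta_!^E([\Delta_!])$ is represented by the $(G{\times}G)$-equivariant composite
\[
(EG_+)^{[2]} \xrightarrow{\Delta} (EG_+)^{[2]} \wedge (EG_+)^{[2]} \xrightarrow{\Delta_! \wedge \Delta_!} S^\tau[G^\lr] \wedge S^\tau[G^\lr].
\]
Under the identification of Lemma \ref{lem:free-loop-ident}, this is by definition $I(M) \in H_0(LM)$.

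Next I would compute $\Delta_E^*([\Delta_!])$. By definition this is simply $\Delta_!$ viewed as a $G$-equivariant map via the diagonal inclusion $G \hookrightarrow G{\times}G$, representing a class in $\pi_0((S^\tau[G^\ad])^{hG})$. Since the $G$-equivariant diagonal $\Delta_{EG_+}\colon EG_+ \to EG_+ \wedge EG_+$ is a weak equivalence of $G$-spectra (both restrict to $S$), this class coincides with the class of
\[
EG_+ \xrightarrow{\Delta_{EG_+}} EG_+ \wedge EG_+ \xrightarrow{\Delta_!} S^\tau[G^\lr],
\]
which by Lemma \ref{lem:Rtraceiseuler} represents $r(M) \in H_0(LM)$ under the norm-based identification $\pi_0((S^\tau[G^\ad])^{hG}) \cong H_0(LM)$.

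Finally, commutativity of the triangle in Lemma \ref{lem:triangle} gives $\Delta_!^E([\Delta_!]) = (\text{vertical eq.}) \circ \Delta_E^*([\Delta_!])$, which translates precisely to $I(M) = r(M)$. The one genuine verification required—and what I expect to be the main, though purely formal, obstacle—is to confirm that the vertical equivalence of Lemma \ref{lem:triangle}, combined with the norm identification $(S^\tau[G^\ad])^{hG} \simeq S[G^\ad]_{hG} \simeq \Sigma^\infty(LM)_+$ used in defining $r(M)$, agrees with the chain of norm equivalences producing the identification of Lemma \ref{lem:free-loop-ident}. Both chains are built from the same norm map applied to $E \wedge S^\tau[G^\lr]$ in slightly different guises, so the compatibility is a naturality/diagram-chase exercise that I would relegate to a brief remark rather than grind through in detail.
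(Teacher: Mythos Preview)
Your proposal is correct and follows essentially the same approach as the paper: both apply Lemma \ref{lem:triangle} with $E = S^\tau[G^\lr]$, identify $\Delta_!^E([\Delta_!])$ with $I(M)$ and $\Delta_E^*([\Delta_!])$ with $r(M)$ via Lemma \ref{lem:Rtraceiseuler}, and conclude from the commutativity of the triangle. Your explicit flagging of the compatibility-of-identifications check is a point the paper leaves implicit.
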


\begin{proof}  
By definition $I(M)$, is the image under $\Delta_! \smsh -$ of $\Delta_!$ itself, where $E = S^\tau \smsh S[G^\lr]$. Lemma \ref{lem:triangle} provides an identification with $r(M)$ in the form of Lemma \ref{lem:Rtraceiseuler} in the previous lemma.
\end{proof}

\begin{cor} If $M$ is homotopy finite, then $I(M) \in H_0(LM)$ lies in summand defined by the
conjugacy class of the trivial element of $\pi$.
\end{cor}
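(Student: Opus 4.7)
The plan is to reduce, via Proposition \ref{prop:I=r}, to the classical computation of the Hattori--Stallings trace on a free $\Bbb Z[\pi]$-module. By Proposition \ref{prop:I=r}, $I(M) = r(M) \in H_0(LM) \cong \Bbb Z\langle\bar\pi\rangle$, so it suffices to show that $r(M)$ lies in the summand $\Bbb Z\langle e\rangle$ determined by the trivial conjugacy class.

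Next, I would invoke the definition $r(M) := r(w(M))$, where $w(M) \in K_0(\Bbb Z[\pi])$ is the Wall finiteness obstruction. Since $M$ is homotopy finite, one may choose a finite CW model of $M$; its cellular chain complex of the universal cover is then a bounded complex of finitely generated free $\Bbb Z[\pi]$-modules, whence
\[
w(M) \;=\; \chi(M)\cdot[\Bbb Z[\pi]] \in K_0(\Bbb Z[\pi]).
\]
In particular, $w(M)$ lies in the image of the change-of-rings map $K_0(\Bbb Z) \to K_0(\Bbb Z[\pi])$ induced by the unit $\Bbb Z \hookrightarrow \Bbb Z[\pi]$.

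Finally, naturality of the Hattori--Stallings trace provides a commutative square
\[
\begin{tikzcd}
K_0(\Bbb Z)\ar[r]\ar[d,"r"'] & K_0(\Bbb Z[\pi])\ar[d,"r"]\\
\Bbb Z\langle e\rangle\ar[r,hook] & \Bbb Z\langle\bar\pi\rangle,
\end{tikzcd}
\]
whose commutativity reduces to the observation that the identity map of $\Bbb Z[\pi]$ has Hattori--Stallings trace equal to $e$. Chasing $\chi(M)$ around this square yields $r(M) = \chi(M)\cdot e \in \Bbb Z\langle e\rangle$, proving the claim. I do not anticipate any real obstacle: the substantive input is Proposition \ref{prop:I=r}, already proved above, and everything afterwards reduces to a direct inspection of the Hattori--Stallings trace on free modules.
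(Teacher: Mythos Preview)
Your proof is correct and follows the same overall strategy as the paper's: reduce to $r(M)$ via Proposition \ref{prop:I=r}, observe that for a homotopy finite $M$ the Wall class $w(M)$ lies in the image of $K_0$ of the trivial group, and conclude by naturality of the trace. The only difference is in the packaging of that last step. The paper routes the argument through Waldhausen's $A$-theory, identifying $w(M)\in\pi_0(A(M))$ and using the B\"okstedt--Dennis trace $\pi_0(A(M))\to H_0(LM)$, together with its naturality in $M$ applied to $\ast\to M$. You instead stay with the purely algebraic setup of the Introduction, using the change-of-rings map $K_0(\Bbb Z)\to K_0(\Bbb Z[\pi])$ and the Hattori--Stallings trace directly. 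Your version is more elementary---it requires nothing beyond what the paper has already defined in \S\ref{sec:intro}---while the paper's formulation has the advantage of making the link to $A$-theory and THH explicit. Either way, the substantive content is identical, and you even obtain the sharper identification $r(M)=\chi(M)\cdot e$.
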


\begin{proof} 
By the proposition it is enough to show that $r(M)$ lies in the trivial conjugacy  class.

It will be convenient to invoke Waldhausen's algebraic $K$-theory of spaces functor $X\mapsto A(X)$.
Here $A(X)$ is the $K$-theory of the category  with cofibrations and weak equivalences
given by the retractive spaces over $X$ which are relatively finitely dominated \cite{Wald_1126}.

As $M$ is finitely dominated, $M\times S^0$ is
a relatively finitely dominated retractive space over $M$ and therefore determines an element
of $\pi_0(A(M)) \cong K_0(\Bbb Z[\pi_1(M)])$. With respect to this identification,
this element is  Wall's finiteness obstruction $w(M)$. 
The image of $w(M)$ in $H_0(LM)$ under the B\"okstedt-Dennis trace map \cite{Boekstedt_THH}
is identified with $r(M)$. 

If $M$ is homotopy finite, then $w(M)$ lies in the image of the homomorphism $\pi_0(A(\ast)) \to \pi_0(A(M))$. 
But the composition $\pi_0(A(\ast)) \to  \pi_0(A(M)) \to H_0(LM)$ factors through $H_0(L\ast) =\Bbb Z$. 
\end{proof}

\begin{cor}  Assume that $M$ is finitely dominated. Suppose that the Bass trace conjecture holds for the group $\pi$.
Then the self-intersection invariant 
$I(M) \in \Bbb Z\langle\bar \pi\rangle$ lies in the summand defined by the  conjugacy class of the trivial element of $\pi$.
\end{cor}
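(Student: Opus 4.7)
The plan is to reduce the corollary to the defining property of the Bass trace conjecture via Proposition \ref{prop:I=r}. First I would invoke that proposition to replace $I(M)$ with $r(M)$, so that it suffices to verify that $r(M) \in H_0(LM) \cong \Bbb Z\langle \bar\pi\rangle$ lies in the summand $\Bbb Z\langle e\rangle$ spanned by the trivial conjugacy class. This is exactly the statement that the image of $r(M)$ in the reduced group $\widetilde{\Bbb Z}\langle \bar\pi\rangle$ vanishes, i.e., that $\tilde r(M) = 0$.

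Next I would unwind the definition of the Reidemeister characteristic. By construction, $r(M) = r(w(M))$, where $w(M) \in K_0(\Bbb Z[\pi])$ is the Wall finiteness obstruction and $r \: K_0(\Bbb Z[\pi]) \to \Bbb Z\langle\bar\pi\rangle$ is the Hattori-Stallings trace of \eqref{eqn:ht-trace-homo}. Passing to the reduced theories, one has $\tilde r(M) = \tilde r(\tilde w(M))$, where $\tilde w(M) \in \widetilde K_0(\Bbb Z[\pi])$ and $\tilde r \: \widetilde K_0(\Bbb Z[\pi]) \to \widetilde{\Bbb Z}\langle \bar\pi\rangle$.

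Finally, the Bass trace conjecture for $\pi$ is precisely the assertion that $\tilde r$ is the zero homomorphism. Applying it to $\tilde w(M)$ yields $\tilde r(M) = 0$, which by the reduction above completes the proof. There is no real obstacle here once Proposition \ref{prop:I=r} is in hand: the corollary is essentially a formal consequence of the identification $I(M) = r(M)$ combined with the definition of the Bass trace conjecture, parallel in structure to the preceding corollary for homotopy finite $M$ (where Waldhausen $K$-theory and the B\"okstedt-Dennis trace were needed to show $\tilde w(M)$ itself lies in the image from a point).
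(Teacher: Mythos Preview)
Your proposal is correct and matches the paper's intent: the corollary is stated without proof there, as it follows immediately from Proposition~\ref{prop:I=r} together with the definition of the Bass trace conjecture, exactly as you outline. The only implicit step is the identification of the homotopy-theoretic $r(M)$ of \S4 with the Hattori--Stallings trace $r(w(M))$ from the introduction, which the paper already uses (via the B\"okstedt--Dennis trace) in the proof of the preceding corollary.
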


\section{Hopf invariants}

The main reference for this section is the book of Crabb and Ranicki \cite{Crabb-Ranicki}.

Let $Y$ be a cofibrant based $G$-space. Then $Y^{[2]}= Y\smsh Y$ is a based $(\Bbb Z_2 \times G)$-space.
We may then form the (not naive) $(\Bbb Z_2 \times G)$-spectrum 
\[
\Sigma^\infty_{\Bbb Z_2} (Y^{[2]})\, .
\]
indexed over the complete $\Bbb Z_2$-universe $\cal U$ (i.e., a countable direct sum of copies of the
regular representation $\Bbb R[\Bbb Z_2]$).

Concretely, the zeroth space of $\Sigma^\infty_{\Bbb Z_2} (Y^{[2]})$  is given by the colimit
\[
\colim_{V \in \cal U} \Omega^VS^V (Y\smsh Y) = \colim_{V\in \cal U} F(S^V,S^V\smsh Y^{[2]})
\]
where $S^V$ is the one-point compactification of $V$
and  $F(S^V,S^V\smsh Y^{[2]})$ denotes the function space of unequivariant based maps $S^V\to S^V \smsh Y^{[2]}$.

According to \cite[\S V.11]{LMS_equivariant}, one has a tom~Dieck fiber sequence of spectra with $G$-action
\begin{equation} \label{eqn:tomDieck-spectra}
\Sigma^\infty D_2(Y) @> \iota >> (\Sigma^\infty_{\Bbb Z_2} (Y^{[2]}))^{\Bbb Z_2} @> \phi >> \Sigma^\infty Y\, .
\end{equation}
in which the the middle term of \eqref{eqn:tomDieck-spectra} is the categorical $\Bbb Z_2$-fixed point spectrum and
$D_2(Y) = Y^{[2]}_{h\Bbb Z_2}$ is the quadratic construction. 

The zeroth space of the middle term of \eqref{eqn:tomDieck-spectra} will be denoted by
\[
Q_{\Bbb Z_2} (Y^{[2]})^{\Bbb Z_2}\, .
\]
Its points are represented by $\Bbb Z_2$-equivariant maps $\alpha\: S^V\to S^V \smsh Y^{[2]}$.
The zeroth space of the rightmost term of \eqref{eqn:tomDieck-spectra} is 
 $QY = \Omega^\infty \Sigma^\infty (Y)$; its points are represented by a map $\beta\: S^W\to S^W\smsh Y$
for some finite dimensional inner product space  $W$.

The displayed map  $\phi$ in \eqref{eqn:tomDieck-spectra} assigns to such an $\alpha$ the induced map of fixed point spaces
\[
\alpha^{\Bbb Z_2}\: S^{V^{\Bbb Z_2}} \to S^{V^{\Bbb Z_2}} \smsh Y
\]
where we have used the fact that $Y$ is the fixed point space of $\Bbb Z_2$ acting on $Y^{[2]}$.
Consequently, on the level of zeroth spaces, there is a homotopy fiber sequence of based $G$-spaces
\begin{equation} \label{eqn:tomDieck}
QD_2(Y) @> \iota >> Q_{\Bbb Z_2} (Y^{[2]})^{\Bbb Z_2}@> \phi >> QY
\end{equation}
which is $G$-equivariantly split.

The splitting of \eqref{eqn:tomDieck}
may be defined as follows: a point of $QY$ is represented by a map
$\beta\: S^W \to S^W\smsh Y$ in which $W$ is a trivial $\Bbb Z_2$-representation (i.e., a finite dimensional
inner product space). This induces  the $\Bbb Z_2$-equivariant map
\[
S^W @> \beta >> S^W \smsh Y @>1_W\smsh \Delta_Y >> S^W\smsh Y^{[2]} 
\]
representing a point of $Q_{\Bbb Z_2} (Y^{[2]})^{\Bbb Z_2}$. This provides a section to 
the map $\phi$ and defines the splitting.

Let $X \in \Top_\ast(G)$ be a finite object.
Then a stable $G$-map $f\: X \stableto Y$, is represented by a $G$-map $f_W\: S^W\smsh X \to S^W\smsh Y$.
One may associate to $f_W$ the $(\Bbb Z_2\times G)$-equivariant map
\[
\Delta_Y \circ f_W\: S^W\smsh X @> f_W >>  S^W\smsh Y @>1 \smsh \Delta_Y >> S^W\smsh Y^{[2]}\, .
\]
One may also associate to $f_W$ the $(\Bbb Z_2\times G)$-equivariant map
\[
(f_W\smsh f_W)\circ \Delta_X\:  S^{W\oplus W} \smsh X @>1 \smsh \Delta_X >>  S^{W\oplus W}\smsh X^{[2]}  @> f_W\smsh f_W >>   S^{W\oplus W}\smsh Y^{[2]}\, ,
\]
where it is understood that $W\oplus W$ is given the transposition action of $\Bbb Z_2$, and we have implicitly shuffled the factors of the displayed smash product.
It follows that  $\Delta_Y \circ f_W$ and $(f_W\smsh f_W)\circ \Delta_X$ represent a pair of $G$-maps
\[
\Delta_Y \circ f, (f\smsh f)\circ \Delta_X \: X \to Q_{\Bbb Z_2}(Y^{[2]})^{\Bbb Z_2}\, .
\]
Furthermore, it is easily checked that these maps are both coequalized by $\phi$:
\[
\xymatrix{
X \ar@<-.3ex>[rr]^(.3){\Delta_Y \circ f} \ar@<.3ex>[rr]_(.3){(f\smsh f)\circ \Delta_X} && Q_{\Bbb Z_2}(Y^{[2]})^{\Bbb Z_2} @>\phi >> Q(Y) \, .
}
\]
Set
\[
\{X,Y\smsh Y\}_{G}^{\Bbb Z_2} := [X, Q_{\Bbb Z_2}(Y^{[2]})^{\Bbb Z_2}]_G\, ,
\]
i.e., the abelian group of homotopy classes of $G$-equivariant maps $X\to Q_{\Bbb Z_2}(Y^{[2]})^{\Bbb Z_2}$.
Then  $\Delta_Y \circ f$ and $(f\smsh f)\circ \Delta_X$ represent elements of $\{X,Y\smsh Y\}_{G}^{\Bbb Z_2}$. 
Moreover, from \eqref{eqn:tomDieck} and the above discussion, one has a canonically split inclusion
\[
\iota\: \{X,D_2Y\}_G \to \{X,Y^{[2]}\}_{G}^{\Bbb Z_2} \, .
\]
It follows that on the level of homotopy classes, there is a unique $G$-equivariant stable homotopy class
\[
H(f) \in \{X,D_2(Y)\}_{G}
\]
such that 
\begin{equation} \label{eqn:hopfinv}
\iota\circ H(f) = (f\smsh f)\circ \Delta_X - \Delta_Y \circ f\, .
\end{equation}
Then $H(f)$ is the {\it  ($G$-equivariant geometric) Hopf invariant} of the stable $G$-map $f\: X\stableto Y$.

\begin{rem} In view of the fact that $\iota$ is a split injection, we will typically write
$H(f)$ in place of $\iota \circ H(f)$ in \eqref{eqn:hopfinv}.
\end{rem}

\begin{rem} \label{rem:G-pi} Crabb and Ranicki \cite[\S7]{Crabb-Ranicki} only consider the case when $G$ is a discrete group. However,
the adaptation to cofibrant topological groups $G$ goes through unchanged. 
In addition, we note that there is a $1$-connected Hurewicz homomorphism
$G\to \pi$, where $\pi= \pi_0(G)$. 
Under suitable restrictions on the dimension of $X$ and the connectivity of $Y$ (which are fulfilled in the cases we care about),
the evident homomorphism
\[
\{X,D_2(Y)\}_G \to  \{X\smsh_G (\pi_+),D_2(Y\smsh_G (\pi_+))\}_\pi
\]
which carries $H(f)$ to $H(f\smsh_G (\pi_+))$ is an isomorphism.  
Consequently, $H(f\smsh_G (\pi_+))$ will contain the same information as $H(f)$.
The reason we prefer to work with $G$ instead of $\pi$ is that it makes some of the proofs more transparent.
\end{rem}

\begin{defn} The {\it transfer} is the composition
\[
\sigma\:  \{X,D_2(Y)\}_{G} @> \iota >> \{X,Y^{[2]}\}_{G}^{\Bbb Z_2} \to \{X,Y^{[2]}\}_{G}
\]
in which the second homomorphism is induced by forgetting that a map is 
$\Bbb Z_2$-equivariant.
\end{defn}

\subsection{The diagonal embedding obstruction $\mu_M$}
In \cite{Klein_diagonals} the first author defines an obstruction to finding a Poincar\'e embedding of the diagonal as the reduced Hopf invariant
of the $(\pi\times \pi)$-equivariant stable map given by inducing $\Delta_!$ along
the homomorphism $G\times G \to \pi\times \pi$. After some minor rewriting, takes the form
\[
\Delta_! \smsh_{G\times G} ((\pi \times \pi)_+) \: \bar M_+ \smsh \bar M_+ \stableto \hat M^\tau \, ,
\]
where $\bar M \to M$ is the universal cover, $\hat M = \bar M \times_\pi (\pi \times \pi)$ and
$\hat M^\tau = S^\tau \smsh_{G} ((\pi \times \pi)_+)$. 
It is straightforward to check that the Hopf invariants in each case coincide (cf.~ Remark \ref{rem:G-pi}). Hence,
\[
H(\Delta_!) = H(\Delta_! \smsh_{G\times G}( (\pi \times \pi)_+)) \, .
\]
It immediately follows that 
\[
\mu_M = H(\Delta_!)\, ,
\]
where $\mu_M$ is the complete obstruction to finding a Poincar\'e embedding of the diagonal.

\begin{rem}
In the above, we have implicitly used Lemma \ref{lem:free-loop-ident}, Remark \ref{rem:free-loop-ident}  and the proof of Proposition \ref{prop:free-loop-ident}
 to define a preferred isomorphism identify $\Bbb Z\langle \bar \pi \rangle$ with 
\[
\Bbb Z\langle \bar \pi \rangle \cong \{EG_+\smsh EG_+, S^\tau[G^\lr] \smsh S^\tau[G^\lr] \}_{G} \, ,
\]
 as well as a preferred isomorphism
\[
Q_d(\pi) \cong \{EG_+\smsh EG_+, D_2(S^\tau[G^\lr])\}_{G} \, .
\] 
We refer the reader to \cite[\S6]{Klein_diagonals} for more details
(note that the discussion in \cite[\S6]{Klein_diagonals} carries over {\it mutatis mutandis} from $\pi$-spectra to  $G$-spectra).
\end{rem}

\section{Proof of the main results}

As noted in the introduction Theorem \ref{bigthm:rank-thm} follows immediately from Theorem \ref{bigthm:main}.

\begin{proof}[Proof of Theorem \ref{bigthm:main}]
From the definition of the Hopf invariant we immediately obtain
\[
\tr H(\Delta_!) = I(M) - \mathfrak e(M) \in H_0(LM).
\]
By Proposition \ref{prop:free-loop-ident}, $\tilde{\mathfrak e}(M) \in H_0(LM,M)$ is trivial, and by Proposition \ref{prop:I=r}
\[
I(M) = r(M).
\]
We thus conclude that
\[
\tilde \tr \mu_M = \tilde \tr H(\Delta_!) = \tilde r(M),
\]
showing the first assertion. The second part follows immediately from \cite[Theorem A]{Klein_diagonals}, which states that $\mu_M = 0$ if there exists a Poincar\'e embedding of the diagonal.
\end{proof}

\begin{proof}[Proof of Corollary \ref{bigcor:main}]
If $M$ is homotopy finite or if the Bass trace conjecture holds for $\pi$, then
$\tilde r(M) = 0$. The result now follows from Theorem \ref{bigthm:main}.
\end{proof}

\begin{proof}[Proof of Corollary \ref{bigcor:ex}] 
In this instance, the Bass trace conjecture holds for $\pi$, so $\tilde \tr(\mu_M) = \tilde r(M)= 0$.
The composition
\[
Q_d(\pi) @> \sigma >> \Bbb Z \langle \bar \pi \rangle @>>> Q_d(\pi) \, ,
\]
is given by multiplication by 2, where $\sigma$ is the transfer, and the second displayed map is the projection onto coinvariants.
By Theorem \ref{bigthm:main},
$\tilde \sigma(\mu_M) = 0$.  Consequently $2\mu_M =0$, so
by \cite[thm.~A]{Klein_diagonals}, it suffices to show that 
$Q_d(\pi) = \Bbb Z \oplus \tilde Q_d(\pi)$ has no 2-torsion elements (here
we have used the fact that $d$ is even to identify $\tilde Q_d(e) =\Bbb Z$).

Let use write $\pi$ as a direct product of $H \times K$ in which $H\subset \pi$ is the $2$-primary
subgroup.  Denote the involution  $x \mapsto x^{-1}$ of $\pi$ by $\iota$.
Then $\iota$ preserves the decomposition of $\pi$, so
$Q_d(\pi) \cong Q_d(H) \otimes Q_d(K)$. It then suffices to show that
$Q_d(H)$ and $Q_d(K)$ are torsion free abelian groups.

Since $d$ is even and
$\iota$ fixes $H$ pointwise, we have
$Q_d(H) = \Bbb Z\langle H \rangle$, so $Q_d(H)$ is torsion free. 

Set $K^* = K \setminus \{e\}$.
Then  $\iota\: K^*\to K^*$ is fixed point free. Consequently, there is a decomposition
\[
K^\ast =  K_- \amalg K_+
 \]
 in which $K_\pm \subset K^*$ are subsets, and $\iota\: K_- \to K_+$ is an isomorphism. It follows
 that $\tilde Q_d(K) \cong \Bbb Z\langle K_- \rangle$. We infer that $\tilde Q_d(K)$ is torsion free. Hence,
 $Q_d(K) = \Bbb Z \oplus \tilde Q_d(K)$ is also torsion free.
\end{proof}

\begin{proof}[Proof of Corollary \ref{bigcor:ex2}] The argument is similar
to the proof of Corollary \ref{bigcor:ex}.
The composition
\[
\tilde Q_d(\pi) @> \tilde \sigma >> \widetilde{\Bbb Z}\langle \bar \pi \rangle @>>> \tilde Q_d(\pi) \, ,
\]
is multiplication by $2$, where the second displayed map is the projection onto coinvariants.
By Theorem \ref{bigthm:main},
$\tilde \sigma(\mu_M) = 0$. Consequently, $2\mu_M = 0$ and it will suffice to show that
$\tilde Q_d(\pi) $ has no 2-torsion. We will see that $\tilde Q_d(\pi)$ is torsion free.

Let $\bar \pi^* =\bar \pi \setminus \{e\}$. Let $\iota\: \pi^\ast \to \pi^\ast$ be the involution given by
$\iota(x) = x^{-1}$.  We claim that $\iota$ defines a
induces a free action of $\Bbb Z_2$ on $\bar \pi^*$.  We prove this by contradiction: if not, then there 
are non-trivial elements $g,x\in \pi$ such that 
\begin{equation} \label{eqn:group}
gxg^{-1} = x^{-1}
\end{equation} 
 Inverting both sides \eqref{eqn:group}, 
we obtain $gx^{-1}g^{-1} = x$.   Substituting the last expression into \eqref{eqn:group}, we infer that 
$g^2x^{-1}g^{-2} = x^{-1}$. Inverting both sides of this last expression, we obtain $g^2xg^{-2} = x$.

 It follows that $g^2$ commutes with $x$. Let $2k+1$ be the order of $g$.
Then $g = (g^2)^{k+1}$ also commutes with $x$. Consequently,  $gxg^{-1} = x$. But by assumption
$gxg^{-1} = x^{-1}$, so
$x = x^{-1}$. Hence, $x^2 = e$ and we obtain a contradiction.  

It follows that $\bar \pi^* = T_- \amalg T_+$ where $\iota$ restricts to
an isomorphism $T_- \to T_+$.  Hence,
\[
\tilde Q_d(\pi) \cong \Bbb Z\langle T_-\rangle
\]
is free abelian with basis $T_-$.  
\end{proof}

\bibliographystyle{elsarticle-num}
\bibliography{john}


\end{document}